\newtheorem{theorem}{Theorem}
\newtheorem{assumption}[theorem]{Assumption}
\DeclareMathOperator{\vvec}{vec}
\newcommand{\wh}{\widehat}
\newcommand{\wt}{\widetilde}
\newcommand{\s}{\mathcal{S}}
\newcommand{\ZZ}{\mathbb{Z}}
\newcommand{\xr}{{\bm r}}
\newcommand{\xs}{{\bm s}}
\newcommand{\xd}{{\bm d}}
\newcommand{\xl}{{\bm \ell}}
\DeclareFontFamily{OT1}{pzc}{}
\DeclareFontShape{OT1}{pzc}{m}{it}%
              {<-> s * [0.900] pzcmi7t}{}
\DeclareMathAlphabet{\mathpzc}{OT1}{pzc}%
                                 {m}{it}
\begin{document}

\title{Shift techniques for Quasi-Birth and Death processes: canonical factorizations and matrix equations\footnote{This work has been supported by GNCS of INdAM and by the PRA project ``Mathematical models and computational methods for complex networks'' at University of Pisa.}}
\author{D.A. Bini, Universit\`a di Pisa, Italy,
\\[1ex]
G. Latouche, Universit\'e libre de Bruxelles, Belgium,
\\[1ex]
B. Meini,Universit\`a di Pisa, Italy.}

\maketitle

\begin{abstract}
We revisit the shift technique applied to Quasi-Birth and Death (QBD)
processes (He, Meini, Rhee, SIAM J. Matrix Anal. Appl., 2001) by
bringing the attention to the existence and properties of canonical
factorizations. To this regard, we prove new results concerning the
solutions of the quadratic matrix equations associated with the QBD.
These results find applications to the solution of the Poisson
equation for QBDs.
\end{abstract}
{\bf Keywords:}
 Quasi-Birth-and-Death processes, Shift technique, Canonical factorizations, Quadratic matrix equations

\noindent {\bf AMS Subject Calssification:}
   15A24,
    47A68,
    60J22.  

\section{Introduction}
Quadratic matrix equations of the kind
\begin{equation}\label{eq:uqme}
A_{-1}+(A_0-I)X+A_1X^2=0,
\end{equation}
where $A_{-1},A_0,A_1$ are given $n\times n$ matrices, are encountered
in many applications, say in the solution of the quadratic eigenvalue
problem, like vibration analysis, electric circuits, control theory
and more \cite{tm,hk}. In the area of Markov chains, an important
application concerns the solution of Quasi-Birth-and-Death (QBD)
stochastic processes, where it is assumed that $A_{-1}$, $A_0$ and
$A_1$ are nonnegative matrices such that $A_{-1}+A_0+A_1$ is
stochastic and irreducible \cite{lr99,blm:book}.

 For this class of problems, together with \eqref{eq:uqme}, the
 dual equation
$X^2A_{-1}+X(A_0-I)+A_1=0$ has a relevant interest.
It is well known that both \eqref{eq:uqme} and the dual equation
have minimal nonnegative solutions $G$ and $R$,
respectively, according to the component-wise ordering, which can be
explicitly related to one another \cite{lr99,neuts81}. These solutions
have an interesting probabilistic interpretation and their
computation is a fundamental task in the analysis of QBD processes.
Moreover they provide the factorization $\varphi(z)=(I-zR)K(I-z^{-1}G)$ of the Laurent polynomial $\varphi(z)=z^{-1}A_{-1}+A_0-I+zA_1$, where $K$ is a nonsingular matrix. A factorization of this kind is canonical if $\rho(R)<1$ and $\rho(G)<1$, where $\rho$ denotes the spectral radius. It is said weak  canonical if $\rho(R)\le 1$ and $\rho(G)\le 1$.

We introduce the matrix polynomial $B(z)=A_{-1}+z(A_0-I)+z^2A_1=z\varphi(z)$ and define the roots of
$B(z)$ as the
zeros of the polynomial $\det B(z)$. If $\xi$ is a root of $B(z)$ we
say that $v$ is an eigenvector associated with $\xi$ if $v\ne 0$ and
$B(\xi)v=0$. 
The location of the roots of $B(z)$
determines the classification of the QBD as positive,
null recurrent or transient, 
and governs the convergence
and the efficiency of the available numerical algorithms for
approximating $G$ and $R$ \cite{blm:book}.
In particular, $B(z)$ has always a root on the unit circle, namely, the
root $\xi=1$, and the corresponding eigenvector is the vector $e$ of
all ones, i.e., $B(1)e=0$.  

If the QBD is recurrent, the root $\xi=1$ is the eigenvalue of largest
modulus of the matrix $G$ and $Ge=e$. In the transient case, that root
is the eigenvalue of  largest modulus of $R$.  These facts have
been used to improve convergence properties of numerical methods for
computing the matrix $G$. The idea, introduced in \cite{hmr01} and
based on the results of \cite{brauer}, is to ``shift'' the root
$\xi=1$ of $B(z)$ to zero or to infinity, and to construct a new
quadratic matrix polynomial $B_\xs(z)=A_{-1}^\xs+z(A_0^\xs-I)+z^2
A_1^\xs$ having the same roots as $B(z)$, except for the root equal to 1,
which is replaced with 0 or infinity. Here the super-(sub-)script $\xs$ means ``shifted''. 
This idea has been subsequently
developed and applied in
\cite{bms:mam5,guo03,gh:shift,gim:shift,ip:shift,meini:ilas10}.

In this paper we revisit the shift technique, and we focus on the
properties of the canonical factorizations.  In particular, we prove
new results concerning the existence and properties of the solutions
of the quadratic matrix equations obtained after the shift.

  By following \cite{blm:book}, we recall that in the positive
  recurrent case the root $\xi=1$ can be shifted to zero by
  multiplying $B(z)$ to the right by a suitable function (right
  shift), while in the transient case the root $\xi=1$ can be shifted
  to infinity by multiplying $B(z)$ to the left by another suitable
  function (left shift).  In the null recurrent case, where $\xi=1$ is
  a root of multiplicity 2, shift is applied both to the left and to
  the right so that one root 1 is shifted to zero and the other root 1
  is shifted to infinity (double shift).  In all the cases, the new
  Laurent matrix polynomial $\varphi_\xs(z)=z^{-1}B_\xs(z)$ is
  invertible on an annulus containing the unit circle in the complex
  plane and we prove that it admits a canonical factorization which is
  related to the weak canonical factorization of
  $\varphi(z)$.  As a consequence, we relate $G$ and $R$ with the
  solutions $ G_{\xs}$ and $R_{\xs}$ of minimal spectral radius of the
  matrix equations $A_{-1}^\xs+(A_0^\xs-I) X+ A_1^\xs X^2=0$ and $X^2
  A_{-1}^\xs+X (A_0^\xs-I)+A_1^\xs=0$, respectively.

A less trivial issue is the existence of the canonical factorization
of $\varphi_\xs(z^{-1})$. We show that such factorization exists and we provide an
explicit expression for it, for the three different kinds of
shifts. The existence of such factorization allows us to express the
minimal nonnegative solutions $\wh G$ and $\wh R$ of the matrix
equations $A_{-1}X^2+(A_0-I)X+ A_{1}=0$ and $A_{-1}+X (A_0-I)+ X^2 A_{1}=0$,
in terms of the solutions of minimal spectral radius $\wh G_{\xs}$
and $\wh R_{\xs}$ of the equations $ A_{-1}^\xs X^2+ (A_0^\xs-I) X+ A_{1}^\xs=0$
and $A_{-1}^\xs+X (A_0^\xs-I) + X^2 A_{1}^\xs=0$, respectively.

The existence of the canonical factorizations of 
$\varphi_\xs(z)$ and $\varphi_\xs(z^{-1})$ has interesting
consequences. Besides providing computational advantages in the
numerical solution of matrix equations, it allows one to give an explicit
expression for the solution of the Poisson problem for QBDs
\cite{bdlm:poisson}.  Another interesting issue related to the shift
technique concerns conditioning. In fact, while null recurrent
problems are ill-conditioned, the shifted counterparts are not. A
convenient computational strategy to solve a null recurrent problem
consists in transforming it into a new one, say by means of the double
shift; solve the latter by using a quadratic convergent algorithm like
cyclic reduction or logarithmic reduction~\cite{blm:book}; then
recover the solution of the original problem from the one of the
shifted problem. For this conversion, the expressions relating the
solutions of the shifted equations to those of the original equations
are fundamental, they are provided in this paper.

The paper is organized as follows. In Section \ref{sec:prel} we recall
some properties of the canonical factorization of matrix polynomials,
and their interplay with the solutions of the associated quadratic
matrix equations, with specific attention to those equations
encountered in QBD processes. In Section \ref{sec:shift} we present
the shift techniques in functional form, with attention to the
properties of the roots of the original and modified matrix
polynomial. In Section \ref{sec:cf} we state the main results on the
existence and properties of canonical factorizations. In particular we
provide explicit relations between the solutions of the original
matrix equations and the solutions of the shifted equations. 
In the Appendix, the reader can find the proof of a technical property used
to prove the main results.

\section{Preliminaries}\label{sec:prel}

In this section we recall some properties of matrix polynomials and of
QBDs, that will be used later in the paper. For a general treatment on
these topics we refer to the books
\cite{blm:book,glr82,nick:book,lr99,neuts81}.

\subsection{Matrix polynomials}
Consider the matrix Laurent polynomial
$\varphi(z)=\sum_{i=-1}^{1}z^i B_i$, where $B_i$,
$i=-1,0,1$, are $n\times n$ complex matrices.  A
canonical factorization of $\varphi(z)$ is a decomposition of the kind
$\varphi(z)=E(z)F(z^{-1})$,
where $E(z)=E_0+zE_1$ and 
$F(z)=F_0+zF_{-1}$
are invertible for $|z|\le 1$. 
A canonical factorization 
is  {\em weak} 
if $E(z)$ and $F(z)$ are invertible for $|z|< 1$ but
possibly singular for some values of $z$ such that $|z|=1$.
The canonical factorization is unique in the form $\varphi(z)=(I-z\wt E_1)K(I-z^{-1}\wt F_{-1})$ for suitable matrices $\wt E_1$, $\wt F_{-1}$ and $K$, see for instance \cite{cg:book}.

Given an $n\times n$ quadratic matrix polynomial
$B(z)=B_{-1}+zB_0+z^2B_1$, we call \emph{roots of} $B(z)$ the roots
$\xi_1,\ldots,\xi_{2n}$ of the polynomial $\det B(z)$ where we assume
that there are $k$ roots at infinity if the degree of $\det B(z)$ is
$2n-k$. In the sequel we also assume that the roots are ordered so
that $|\xi_1|\le \cdots\le|\xi_{2n}|$.

Consider the following matrix equations
\begin{eqnarray}
&B_{-1}+B_0X+B_1X^2=0,\label{eq:g}\\
&X^2 B_{-1}+XB_0+B_1=0,\label{eq:r}\\
&B_{-1}X^2+B_0X+ B_{1}=0,\label{eq:gh}\\
&B_{-1}+X B_0+ X^2 B_{1}=0.\label{eq:rh}
\end{eqnarray}

Observe that if $X$ is a solution of \eqref{eq:g} and $Xv=\lambda v$
for some $v\ne 0$, then $B(\lambda)v=0$ that is, $\lambda$ is a root
of $B(z)$. Similarly, the eigenvalues of any solution of 
\eqref{eq:rh} are roots of $B(z)$, and the reciprocal of the
eigenvalues of any solution of \eqref{eq:r} or \eqref{eq:gh} are roots
of $B(z)$ as well. Here we adopt the convention that $1/0=\infty$ and
$1/\infty=0$.

We state the following general result on canonical factorizations
which extends Theorem 3.20 of \cite{blm:book}:

\begin{theorem}\label{thm:h0}
  Let $\varphi(z)=z^{-1}B_{-1}+B_0+zB_1$ be an $n\times n$ Laurent matrix
  polynomial. Assume that the roots of $B(z)=z\varphi(z)$ are such that
  $|\xi_n|<1<|\xi_{n+1}|$ and that there exists a matrix $G$ which
  solves the matrix equation \eqref{eq:g} with $\rho(G)=|\xi_n|$. Then
  the following properties hold:
\begin{enumerate}
\item $\varphi(z)$ has the canonical factorization
  $\varphi(z)=(I-zR)K(I-z^{-1}G)$,  where $K=B_0+B_1G$,
  $R=-B_1K^{-1}$, $\rho(R)=1/|\xi_{n+1}|$ and $R$ is the solution of
  the equation \eqref{eq:r} with
  minimal spectral radius.

\item $\varphi(z)$ is invertible in the annulus $\mathbb A=\{
  z\in\mathbb C : |\xi_n|<z<|\xi_{n+1}|\}$ and
  $H(z)=\varphi(z)^{-1}=\sum_{i=-\infty}^{+\infty}z^iH_i$ is
  convergent for $z\in \mathbb A$, where
  \[
H_i=\left\{\begin{array}{ll}
G^{-i}H_0,& {\mbox for~} i<0,\\
\sum_{j=0}^{+\infty}G^j K^{-1} R^j,& {\mbox for~} i=0,\\
H_0R^i,& {\mbox for~} i>0.
\end{array}\right. 
\]

\item If $H_0$ is nonsingular, then $\varphi(z^{-1})$ has the
  canonical factorization $\varphi(z^{-1})=(I-z\wh R)\wh K(I-z^{-1}\wh
  G)$,  where $\wh K=B_0+B_1\wh G=B_0+\wh R B_{-1}$ and
  $\wh G=H_0 R H_0^{-1}$, $\wh R=H_0^{-1} G H_0$. Moreover, $\wh G$
  and $\wh R$ are the solutions of minimal spectral radius of the
  equations \eqref{eq:gh} and \eqref{eq:rh},
  respectively.
\end{enumerate}

\end{theorem}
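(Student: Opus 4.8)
The plan is to establish the three parts in order, each building on the previous one. For part 1, I would start from the hypothesis that $G$ solves \eqref{eq:g} with $\rho(G) = |\xi_n| < 1$. Setting $K = B_0 + B_1 G$ and $R = -B_1 K^{-1}$, the first task is to verify algebraically that $\varphi(z) = (I - zR) K (I - z^{-1} G)$; this is a direct expansion using $B_{-1} + B_0 G + B_1 G^2 = 0$, which forces $B_{-1} = -(B_0 + B_1 G)G = -KG$, hence the constant and $z^{-1}$ terms match, and $B_1 = -RK$ handles the $z$ term. To see this is a genuine (not merely formal) canonical factorization I must show $K$ is nonsingular and $\rho(R) < 1$. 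Nonsingularity of $K$ and the identification $\rho(R) = 1/|\xi_{n+1}|$ come from a degree/root-counting argument: the factorization shows $\det \varphi(z) = \det(I - zR)\det K \det(I - z^{-1}G)$ up to a power of $z$, so the $n$ roots of $B(z)$ inside the unit disk are the eigenvalues of $G$ (all of modulus $\le |\xi_n| < 1$, and in fact equal as a set by the $\rho(G) = |\xi_n|$ assumption plus a counting check) and the $n$ roots outside are the reciprocals of the eigenvalues of $R$; since $|\xi_{n+1}| > 1$ we get $\rho(R) = 1/|\xi_{n+1}| < 1$. That $R$ is the solution of \eqref{eq:r} of minimal spectral radius follows because any eigenvalue of any solution of \eqref{eq:r} has reciprocal a root of $B(z)$, hence modulus $\ge 1/|\xi_{n+1}|$. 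This is essentially the content of Theorem 3.20 of \cite{blm:book}, extended by dropping nonnegativity assumptions, so I would cite that and indicate the modifications.

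For part 2, invertibility of $\varphi(z)$ on the annulus $\mathbb A$ is immediate from part 1: for $z \in \mathbb A$ we have $|z| < |\xi_{n+1}| = 1/\rho(R)$ so $\rho(zR) < 1$ and $I - zR$ is invertible, and $|z| > |\xi_n| = \rho(G)$ so $\rho(z^{-1}G) < 1$ and $I - z^{-1}G$ is invertible; $K$ is invertible by part 1. Then $H(z) = (I - z^{-1}G)^{-1} K^{-1} (I - zR)^{-1}$, and expanding each factor in its Neumann series, $(I - z^{-1}G)^{-1} = \sum_{j \ge 0} z^{-j} G^j$ and $(I - zR)^{-1} = \sum_{j \ge 0} z^j R^j$, and multiplying out and collecting powers of $z$ gives exactly the stated formulas for $H_i$: the $i = 0$ term is $\sum_{j \ge 0} G^j K^{-1} R^j$, and for $i > 0$ only the $R$-series contributes the extra power, giving $H_i = \bigl(\sum_{j\ge 0} G^j K^{-1} R^{j}\bigr) R^i = H_0 R^i$, with the symmetric statement for $i < 0$. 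Convergence on $\mathbb A$ is guaranteed by the two spectral radius bounds.

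For part 3, assuming $H_0$ nonsingular, the idea is that $\varphi(z^{-1})$ has constant term $H(z^{-1})$'s structure reversed; concretely I would substitute $z \mapsto z^{-1}$ in $H(z)$, so $H(z^{-1}) = \sum_i z^{-i} H_i = \sum_i z^i H_{-i}$, and recognize this as the Laurent expansion of $\varphi(z^{-1})^{-1}$ on the annulus $1/|\xi_{n+1}| < |z| < 1/|\xi_n|$, which still contains the unit circle. Using $H_{-i} = G^i H_0$ for $i > 0$ and $H_{-i} = H_0 R^{-i}$ for $i < 0$, one reads off that $\varphi(z^{-1})^{-1} = \sum_{j\ge 0} z^j G^j H_0 + \sum_{j \ge 1} z^{-j} H_0 R^j = (I - zG)^{-1} H_0 + \bigl(H_0 (I - z^{-1}R)^{-1} - H_0\bigr)$; rearranging, $\varphi(z^{-1})^{-1} = (I - zG)^{-1}\bigl[H_0 - H_0 + (I-zG) H_0 (I - z^{-1}R)^{-1} \cdot (\ldots)\bigr]$ — here the clean way is to guess the factorization $\varphi(z^{-1}) = (I - z\wh R)\wh K (I - z^{-1}\wh G)$ with $\wh G = H_0 R H_0^{-1}$, $\wh R = H_0^{-1} G H_0$, and verify it by checking that its inverse $(I - z^{-1}\wh G)^{-1} \wh K^{-1} (I - z\wh R)^{-1}$ matches the series above, using $\wh K = H_0^{-1} K H_0$ (equivalently $\wh K^{-1} = H_0^{-1} K^{-1} H_0$, and one checks $H_0 = H_0 \cdot$ identity consistency). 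Since $\wh G$ is similar to $R$ and $\wh R$ similar to $G$, we get $\rho(\wh G) = \rho(R) = 1/|\xi_{n+1}| < 1$ and $\rho(\wh R) = \rho(G) = |\xi_n| < 1$, so the factorization is canonical. That $\wh G$ solves \eqref{eq:gh} and $\wh R$ solves \eqref{eq:rh} with minimal spectral radius follows by substituting the factorization into those equations (or conjugating the equations \eqref{eq:r}, \eqref{eq:g} satisfied by $R$, $G$ by $H_0$) and invoking the root-reciprocal correspondence for minimality, as in part 1. The identities $\wh K = B_0 + B_1 \wh G = B_0 + \wh R B_{-1}$ come from matching constant terms in the two ways of expanding $(I - z\wh R)\wh K(I-z^{-1}\wh G)$ against $\varphi(z^{-1}) = z B_{-1} + B_0 + z^{-1} B_1$.

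The main obstacle I anticipate is part 3: unlike parts 1 and 2, it is not a routine consequence of known QBD theory, and the delicate point is justifying that $H_0$ being nonsingular is exactly what makes the reversed series factor through the conjugating matrix $H_0$ — one must be careful that the formal rearrangement of the doubly-infinite series into a product of Neumann series is valid on the relevant annulus, and that the similarity relations $\wh G = H_0 R H_0^{-1}$, $\wh R = H_0^{-1} G H_0$ are forced rather than merely consistent. Verifying the minimality of the spectral radii of $\wh G$ and $\wh R$ among all solutions of \eqref{eq:gh}, \eqref{eq:rh} also requires the full root-counting bookkeeping (including roots at infinity, per the stated convention $1/0 = \infty$), which is where most of the careful work lies.
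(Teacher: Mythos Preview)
Your treatment of Parts 1 and 2 is correct and matches the paper's approach: both are covered by Theorem~3.20 of \cite{blm:book}, and your Neumann-series computation for the $H_i$ is exactly the standard one.

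In Part 3, your overall strategy is the same as the paper's---pass to $H(z^{-1})$, recognize the series via the conjugates $\wh G=H_0RH_0^{-1}$ and $\wh R=H_0^{-1}GH_0$, and read off a canonical factorization---but there is a concrete error. You assert $\wh K=H_0^{-1}KH_0$ (equivalently $\wh K^{-1}=H_0^{-1}K^{-1}H_0$), and this is false in general: the coefficients $B_i$ do not commute with $H_0$, so conjugation by $H_0$ does not carry $K=B_0+B_1G$ to $B_0+B_1\wh G$. For the same reason, your suggestion to obtain that $\wh G$ solves \eqref{eq:gh} by ``conjugating \eqref{eq:r} by $H_0$'' does not work: conjugating $R^2B_{-1}+RB_0+B_1=0$ yields an equation in $H_0B_iH_0^{-1}$, not in $B_i$.

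What the paper actually does is carry your first (abandoned) computation to completion. Writing
\[
H(z^{-1})=\bigl((I-z^{-1}\wh G)^{-1}-I\bigr)H_0+H_0+H_0\bigl((I-z\wh R)^{-1}-I\bigr),
\]
one factors this as $(I-z^{-1}\wh G)^{-1}Y(I-z\wh R)^{-1}$ with $Y=H_0-\wh G H_0\wh R$ (a short algebraic check). Nonsingularity of $Y$ follows because otherwise $\det H(z^{-1})\equiv 0$ on the annulus. Inverting gives $\varphi(z^{-1})=(I-z\wh R)Y^{-1}(I-z^{-1}\wh G)$, which is canonical since $\rho(\wh G)=\rho(R)<1$ and $\rho(\wh R)=\rho(G)<1$; then $Y^{-1}=\wh K=B_0+B_{-1}\wh G$ by the uniqueness of canonical factorizations. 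That $\wh G$, $\wh R$ solve \eqref{eq:gh}, \eqref{eq:rh} is verified by direct substitution (not by conjugation), and minimality follows from the eigenvalue/root correspondence exactly as you say.
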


\begin{proof}
Parts 1~and 2~are stated in Theorem 3.20 of \cite{blm:book}. We
prove Part 3. From 2, the function
$H(z^{-1})=\varphi(z^{-1})^{-1}$ is analytic in an annulus
$\wh{\mathbb A}$ contained in $\mathbb A$ and containing the unit circle.
From the expression of $H_i$ we
obtain
$ 
H(z^{-1})=\sum_{i=1}^{+\infty}z^{-i}H_0R^{i}+H_0+\sum_{i=1}^{+\infty}z^i G^i H_0.
$ 
Since $\det H_0\ne 0$, we may rewrite the latter equation as
\[
\begin{split}H(z^{-1})
&=\sum_{i=1}^{+\infty}z^{-i}(H_0R^{i}H_0^{-1})H_0+H_0+\sum_{i=1}^{+\infty}z^i H_0 (H_0^{-1}G^i H_0) \\
&= \sum_{i=1}^{+\infty}z^{-i}\wh G^{i}H_0+H_0+\sum_{i=1}^{+\infty}z^i H_0 \wh R^i,
\end{split}
\]
where we have set $\wh G=H_0RH_0^{-1}$ and $\wh R=H_0^{-1}G H_0$. Since the matrix power series in the above equation are convergent in $\wh{\mathbb A}$, 
more precisely $\sum_{i=1}^{+\infty}z^{-i}\wh G^{i}=(I-z^{-1}\wh G)^{-1}-I$ and
$\sum_{i=1}^{+\infty}z^i  \wh R^i=(I-z\wh R)^{-1}-I$, we may write
\[
\begin{split}
H(z^{-1})  
&=( (I-z^{-1}\wh G)^{-1}-I) H_0+H_0+H_0( (I-z\wh R)^{-1}-I)\\
&= (I-z^{-1}\wh G)^{-1} Y (I-z\wh R)^{-1}
\end{split}
\]
where 
\[
Y = H_0(I-z\wh R) - (I-z^{-1}\wh G)H_0(I-z\wh R) +(I-z^{-1}\wh G)H_0=
H_0-\wh G H_0 \wh R.
\]
The matrix $Y$ cannot be singular since otherwise $\det H(z^{-1})=0$ for any $z\in{\wh {\mathbb A}}$, which contradicts the invertibility of $H(z^{-1})$.
Therefore, we find that
$\varphi(z^{-1})=(I-z\wh R) Y^{-1} (I-z^{-1}\wh G)$ for $z\in\wh{\mathbb A}$, in particular for $|z|=1$. This factorization is canonical since $\rho(\wh R)=\rho(G)<1$ and  $\rho(\wh G)=\rho(R)<1$.
By the uniqueness of canonical factorizations \cite{cg:book}, one has
$Y^{-1}=\wh K=B_0+B_{-1}\wh G$. One finds, by direct inspection, that the matrices $\wh G$ and $\wh R$ are solutions  of
\eqref{eq:gh} and \eqref{eq:rh},  respectively. Moreover, they are solutions of minimal spectral radius since their eigenvalues coincide with the $n$ roots with smallest modulus of $B(z)$ and of $zB(z^{-1})$, respectively.
\end{proof}

The following result holds under weaker assumptions and provides the converse property of part 3 of Theorem \ref{thm:h0}.

\begin{theorem}\label{thm:w}
Let $\varphi(z)=z^{-1}B_{-1}+B_0+zB_1$ be an $n\times n$ Laurent matrix
  polynomial such that the roots of $B(z)=z\varphi(z)$ satisfy  $|\xi_n|\le 1\le|\xi_{n+1}|$.
The following properties hold:
\begin{enumerate}
\item 
If there exists a solution $G$ to the matrix equations 
\eqref{eq:g} such that $\rho(G)=|\xi_n|$, then
$\varphi(z)$ has the (weak) canonical factorization
  $\varphi(z)=(I-zR)K(I-z^{-1}G)$,  where $K=B_0+B_1G=B_0+RB_{-1}$,
  $R=-B_1K^{-1}$,  and $R$ is a solution of
   \eqref{eq:r} with $\rho(R)=1/|\xi_{n+1}|$;
  \item 
if there exists a solution $\wh G$ to the matrix equation
\eqref{eq:gh} such that $\rho(\wh G)=|1/\xi_{n+1}|$, then
$\varphi(z^{-1})$ has the (weak) 
  canonical factorization $\varphi(z^{-1})=(I-z\wh R)\wh K(I-z^{-1}\wh
  G)$,  where $\wh K=B_0+B_1\wh G=B_0+\wh R B_{-1}$, and
  $\wh R$ is a solution of
  \eqref{eq:rh} with $\rho(\wh R)=|\xi_n|$;
\item if $|\xi_n|<|\xi_{n+1}|$, and if there exist solutions $G$ and $\wh G$ to the matrix equations 
\eqref{eq:g} and \eqref{eq:gh}, respectively, such that $\rho(G)=|\xi_n|$, $\rho(\wh G)=|1/\xi_{n+1}|$, then the series 
$
W=\sum_{i=0}^\infty G^iK^{-1}R^i
$
 is convergent,
 $W$ is the unique solution of the Stein equation $X-GXR=K^{-1}$, 
  $W$ is nonsingular and $\wh G=W R W^{-1}$, $\wh R=W^{-1} G W$.
Moreover,  $W^{-1}=K(I-G\wh G)$ and $I-G\wh G$ is invertible.
\end{enumerate}
\end{theorem}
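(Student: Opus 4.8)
The plan is to establish the three statements in turn, obtaining the second from the first by reversing the variable and the third from the first two. \emph{First statement.} From a solution $G$ of \eqref{eq:g} one has the polynomial identity $B(z)=(K+zB_1)(zI-G)$ with $K:=B_0+B_1G$, which is just a restatement of $B_{-1}+B_0G+B_1G^2=0$, that is, of $KG=-B_{-1}$. Taking determinants, $\det B(z)=\det(K+zB_1)\det(zI-G)$; since the $n$ eigenvalues of $G$ are roots of $B(z)$ of modulus at most $\rho(G)=|\xi_n|\le 1$ they are the $n$ roots of smallest modulus, so the finite roots of $\det(K+zB_1)$ are $\xi_{n+1},\dots,\xi_{2n}$, all of modulus $\ge|\xi_{n+1}|\ge 1>0$; in particular $\det K=\det(K+0\cdot B_1)\ne 0$. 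Setting $R:=-B_1K^{-1}$, a direct expansion (using $KG=-B_{-1}$, $RK=-B_1$, and the resulting $RB_{-1}=B_1G$) gives $\varphi(z)=(I-zR)K(I-z^{-1}G)$ and $K=B_0+RB_{-1}$; the eigenvalues of $R$ are the reciprocals of $\xi_{n+1},\dots,\xi_{2n}$, hence $\rho(R)=1/|\xi_{n+1}|\le 1$ and the factorization is (weak) canonical, and substituting it into $\varphi$ shows $R$ solves \eqref{eq:r}. The one delicate point is the nonsingularity of $K$, equivalently the identification of the spectrum of $G$; it is automatic when $|\xi_n|<1$ or $|\xi_{n+1}|>1$, and in the remaining (doubly critical) case it rests on $G$ being the solution whose eigenvalues are the $n$ roots of smallest modulus.

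\emph{Second statement.} I would apply the first statement to the reversed Laurent polynomial $\wt\varphi(z):=\varphi(z^{-1})=z^{-1}B_1+B_0+zB_{-1}$: the roots of $z\wt\varphi(z)=B_1+zB_0+z^2B_{-1}$ are the reciprocals of those of $B(z)$, so its $n$-th and $(n+1)$-st smallest roots have moduli $1/|\xi_{n+1}|\le 1\le 1/|\xi_n|$, while equation \eqref{eq:g} written for $\wt\varphi$ is exactly \eqref{eq:gh} and \eqref{eq:r} written for $\wt\varphi$ is exactly \eqref{eq:rh}. Since by hypothesis \eqref{eq:gh} has a solution $\wh G$ with $\rho(\wh G)=1/|\xi_{n+1}|$, the first statement applied to $\wt\varphi$ produces the (weak) canonical factorization $\varphi(z^{-1})=(I-z\wh R)\wh K(I-z^{-1}\wh G)$, with $\wh K$ given by the corresponding formulas and $\wh R$ a solution of \eqref{eq:rh} of spectral radius $|\xi_n|$.

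\emph{Third statement.} Assume $|\xi_n|<|\xi_{n+1}|$ and that both $G$ and $\wh G$ exist. Then $\rho(G)\rho(R)=|\xi_n|/|\xi_{n+1}|<1$, so $W=\sum_{i\ge 0}G^iK^{-1}R^i$ converges geometrically; telescoping gives $W-GWR=K^{-1}$, and this Stein equation has a unique solution because $\mu\nu\ne 1$ for all eigenvalues $\mu$ of $G$ and $\nu$ of $R$ (the modulus of $\mu\nu$ is at most $\rho(G)\rho(R)<1$). On the annulus $\mathbb A=\{z:|\xi_n|<|z|<|\xi_{n+1}|\}$ the first statement gives $\varphi(z)^{-1}=(I-z^{-1}G)^{-1}K^{-1}(I-zR)^{-1}=\sum_{i,j\ge 0}z^{j-i}G^iK^{-1}R^j$, whose constant Laurent coefficient is $W$ and whose coefficients of $z$ and $z^{-1}$ are $WR$ and $GW$; by the second statement (available because $\wh G$ exists) the same function equals $(I-z\wh G)^{-1}\wh K^{-1}(I-z^{-1}\wh R)^{-1}=\sum_{i,j\ge 0}z^{i-j}\wh G^i\wh K^{-1}\wh R^j$ on $\mathbb A$. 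Comparing Laurent coefficients gives $\sum_{i\ge 0}\wh G^i\wh K^{-1}\wh R^i=W$ together with the intertwining relations $\wh GW=WR$ and $W\wh R=GW$.

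The remaining step---the one I expect to be the main obstacle---is to show that $W$ is nonsingular; this is exactly where the existence of $\wh G$ (rather than of $G$ alone) is used, in place of the ad hoc hypothesis ``$H_0$ nonsingular'' of part~3 of Theorem~\ref{thm:h0}. Substituting $WR=\wh GW$ into the Stein identity yields $K^{-1}=W-GWR=W-G\wh GW=(I-G\wh G)W$, so $K(I-G\wh G)$ is a left inverse of the square matrix $W$; hence $W$ is invertible with $W^{-1}=K(I-G\wh G)$, and $I-G\wh G=K^{-1}W^{-1}$ is invertible as a product of invertible matrices. Finally, with $W$ invertible the intertwining relations give $\wh G=WRW^{-1}$ and $\wh R=W^{-1}GW$, which completes the proof.
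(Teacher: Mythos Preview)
Your treatment of parts 1 and 2 matches the paper's: it too simply points to Theorem~3.20 of \cite{blm:book} for the argument, and the delicate point you flag about identifying the spectrum of $G$ (hence the nonsingularity of $K$) in the doubly-critical case $|\xi_n|=|\xi_{n+1}|$ is likewise not spelled out there.

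For part 3 you take a genuinely different route. The paper proves the invertibility of $W$ by a scaling trick: it replaces $B(z)$ by $B_t(z)=B(tz)$ with $|\xi_n|<t<|\xi_{n+1}|$, so that the rescaled roots satisfy the strict separation $|\xi_{n,t}|<1<|\xi_{n+1,t}|$; it then invokes Theorem~3.20 of \cite{blm:book} on $B_t$ to obtain that the corresponding $H_{0,t}$ is nonsingular, observes $W=tH_{0,t}$, and only afterwards derives the formula $W^{-1}=K(I-G\wh G)$ from the already-known relation $\wh G=WRW^{-1}$. Your argument instead exploits both factorizations from parts 1 and 2 to write two convergent Laurent expansions of $\varphi(z)^{-1}$ on the annulus $\{|\xi_n|<|z|<|\xi_{n+1}|\}$ and reads off the intertwining relations $\wh GW=WR$ and $W\wh R=GW$ directly from the coefficients of $z^{\pm 1}$; substituting the first of these into the Stein identity $W-GWR=K^{-1}$ gives $(I-G\wh G)W=K^{-1}$, which simultaneously proves that $W$ is invertible and yields the closed form $W^{-1}=K(I-G\wh G)$. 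This is more self-contained---it avoids both the rescaling and the external appeal to nonsingularity of $H_0$---and it makes the role of the hypothesis on $\wh G$ transparent: it is exactly what furnishes the second factorization, and hence the intertwining relations, needed to exhibit a left inverse of $W$.
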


\begin{proof}
Properties 1 and 2 can be proved as Property 1 in Theorem 3.20 of \cite{blm:book}. 
Assume that $|\xi_n|<|\xi_{n+1}|$.
Since $\rho(G)$ or $\rho(R)$ is less than one, the series $\sum_{i=1}^\infty G^iK^{-1}R^i$ is convergent.
Observe that $GWR=\sum_{i=1}^\infty G^iK^{-1}R^i$ so that $W-GWR=K^{-1}$.
Therefore  $W$ solves the Stein equation  $X-GXR=K^{-1}$. The solution
is unique since $X$ solves the Stein equation if and only if
$(I\otimes I-R^T\otimes G)\vvec(X)=\vvec(K^{-1})$, where
$\vvec(\cdot)$ is the operator that stacks the columns of a matrix and
$\otimes$ is the Kronecker product; the matrix of the latter system is
nonsingular since $\rho(R^T\otimes G)=\rho(G)\rho(R)<1$. 
We prove that $\det (W)\ne 0$.
 Assume that $|\xi_n|\le 1$ and $|\xi_{n+1}|>1$  and choose
 $t\in\mathbb{R}$ such that $|\xi_n|<t<|\xi_{n+1}|$. Consider the
 matrix polynomial 
\[
B_t(z):=B(tz)=B_{-1,t}+zB_{0,t}+z^2B_{1,t},
\] where
 $B_{-1,t}=B_{-1}$, $B_{0,t}=tB_{0}$ and $B_{1,t}=t^2B_{1}$. The roots
 of $B_t(z)$ are $\xi_{i,t}=\xi_i/t$, $i=1,\ldots,2n$. Therefore, for
 the chosen $t$, we have 
$|\xi_{n,t}|<1<|\xi_{n+1,t}|$. Moreover the matrices $G_t=t^{-1}G$ and $\wh G_t=t\wh G$ 
are solutions with spectral radius less than one of the matrix equations
$B_{-1,t}+B_{0,t}X+B_{1,t}X^2=0$ and $B_{-1,t}X^2+B_{0,t}X+ B_{1,t}=0$, respectively.
In this way, the matrix polynomial $B_t(z)$ satisfies the assumptions
of Theorem 3.20 of \cite{blm:book}, and the matrix
$H_{0,t}=\sum_{i=0}^\infty G_t^i (B_{0,t}+B_{1,t}G_t)^{-1} R_t^i$,
where $R_t=tR$, is nonsingular. One  verifies by direct inspection
that $W=tH_{0,t}$. Therefore we conclude that $W$ is nonsingular as
well. 
Applying again Theorem 3.20 yields $\wh G_t=H_{0,t} R_t H_{0,t}^{-1}$
and $\wh R_t=H_{0,t}^{-1} G_t H_{0,t}$, where $\wh R_t=t^{-1}\wh R$,
therefore $\wh G=W R W^{-1}$ and $\wh R=W^{-1} G W$. Similar arguments
may be used if $|\xi_n|<1$ and $|\xi_{n+1}|\ge 1$.
Concerning the expression of $W^{-1}$, by the definition of $W$ we have
$
(I-G\wh G)W=W-\sum_{j=0}^\infty G^{j+1}K^{-1}R^{j+1}=K^{-1},
$
so that $W^{-1}=K(I-G\wh G)$.
\end{proof}

\subsection{Nonnegative matrices, quadratic matrix equations and QBDs}
A real matrix $A$ is nonnnegative (positive) if all its entries are nonnegative (positive), and we write $A\ge 0$ ($A>0$). If $A$ and $B$ are real matrices, we write $A\ge B$ ($A>B$) if $A-B\ge 0$ ($A-B>0$).
An $n\times n$ real matrix $M=\alpha I- N$ is called an M-matrix if $N\ge 0$  and $\alpha\ge\rho(N)$.
A useful property is that the inverse of a nonsingular M-matrix is nonnegative. 
For more properties on nonnegative matrices and M-matrices we refer to the book \cite{ber-ple:book}.

Assume we are given $n\times n$ nonnegative matrices $A_{-1}$, $A_0$ and $A_1$ such that $A_{-1}+A_0+A_1$ is stochastic.  The matrices $A_{-1}$, $A_0$ and $A_1$ define the homogeneous part of the
infinite transition matrix
\[
P = \left[
\begin{array}{cccc}
A'_0    & A'_1    &            & 0 \\
A_{-1} & A_0    & A_1         &       \\
0      & \ddots & \ddots    & \ddots\\
\end{array}
\right]
\]
of a QBD with space state $\mathbb{N} \times \s$, $\s=\{1,\ldots,n\}$,
where $A'_0$ and $A'_1$ are $n\times n$ matrices \cite{lr99}. We
assume that the matrix $P$ is irreducible and that the following
properties are satisfied, they are not restrictive for models of
practical interest:

\begin{assumption}
   \label{a:zero}
The matrix $A_{-1} +A_0 +A_1$ is irreducible.
\end{assumption}

\begin{assumption}
   \label{a:one}
The doubly infinite QBD on $\ZZ \times \s$ has only one final class
$\ZZ \times \s_*$, where $\s_* \subseteq \s$.  Every other state is on a
path to the final class.
Moreover, the set $\s_*$ is not empty.
\end{assumption}

Assumption \ref{a:one} is Condition 5.2 in \cite[Page 111]{blm:book} where it is implicitly assumed that 
$\s_*$ is not empty.

We denote by $G$, $R$, $\wh G$ and $\wh R$ the minimal nonnegative solutions of
the following equations
\begin{equation}\label{eq:4eq}
\begin{split}
&A_{-1}+(A_0-I)X+A_1X^2=0,\\
&X^2 A_{-1}+X(A_0-I)+A_1=0,\\
&A_{-1}X^2+(A_0-I)X+ A_{1}=0,\\
&A_{-1}+X (A_0-I)+ X^2 A_{1}=0,
\end{split}
\end{equation}
respectively.  The matrices $G$, $R$, $\wh G$ and $\wh R$ exist, are unique and have a probabilistic interpretation \cite{lr99}. If $S$ is any of $G$, $R$, $\wh G$ and $\wh R$, we denote by $\rho_S$ the spectral radius $\rho(S)$ of $S$, and we denote by $u_S$ and $v_S^T$ a nonnegative right and left Perron eigenvector of $S$, respectively, so that $Su_S=\rho_S u_S$ and $v_S^T S=\rho_S v_S^T$.

Define the matrix polynomial 
\[
B(z)=A_{-1}+z(A_0-I)+z^2A_1=B_{-1}+zB_0+z^2B_1,
\]
and the Laurent matrix polynomial $\varphi(z)=z^{-1}B(z)$.  Denote
by $\xi_1,\ldots,\xi_{2n}$ the roots of $B(z)$, ordered such  that
$|\xi_1|\le \cdots\le|\xi_{2n}|$.    According to Theorem~\ref{thm:w}
and \cite[Theorem 5.20]{blm:book}, the eigenvalues of $G$, $R$, $\wh
G$ and $\wh R$ are related as follows to the roots $\xi_i$, $i=1,\ldots,2n$.

\begin{theorem}\label{thm:spprop}
The eigenvalues of $G$ and $\wh R$ are $\xi_1,\ldots,\xi_n$, while the eigenvalues of $R$ and $\wh G$ are 
$\xi_{n+1}^{-1},\ldots,1/\xi_{2n}^{-1}$. Moreover $\xi_n$, $\xi_{n+1}$ are real positive, and:
\begin{enumerate}
\item if the QBD is positive recurrent then $\xi_n=1<\xi_{n+1}$, $G$ is stochastic and $\wh G$ is substochastic;
\item if the QBD is null recurrent then $\xi_n=1=\xi_{n+1}$,  $G$ and $\wh G$ are stochastic;
\item if the QBD is transient then $\xi_n<1=\xi_{n+1}$, $G$ is substochastic and $\wh G$ is stochastic.
\end{enumerate}
\end{theorem}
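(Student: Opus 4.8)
The plan is to combine the general factorization result of Theorem~\ref{thm:w} (and its classical counterpart, \cite[Theorem 5.20]{blm:book}) with the probabilistic characterization of the spectral radii of $G$, $R$, $\wh G$, $\wh R$. First I would recall that, by \cite[Theorem 5.20]{blm:book} (or by applying Theorem~\ref{thm:w} to $\varphi(z)$), the eigenvalues of the minimal nonnegative solution $G$ of the first equation in \eqref{eq:4eq} are exactly the $n$ roots $\xi_1,\dots,\xi_n$ of smallest modulus of $B(z)$, counted with multiplicity, and that $\rho(G)=|\xi_n|$; the eigenvalues of $R$ are the reciprocals $\xi_{n+1}^{-1},\dots,\xi_{2n}^{-1}$, with $\rho(R)=|\xi_{n+1}|^{-1}$. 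The analogous statements for the "reversed" equations give that the eigenvalues of $\wh R$ are $\xi_1,\dots,\xi_n$ and those of $\wh G$ are $\xi_{n+1}^{-1},\dots,\xi_{2n}^{-1}$; alternatively, once one has the nonsingular $W$ from part 3 of Theorem~\ref{thm:w} (which applies whenever $|\xi_n|<|\xi_{n+1}|$, i.e.\ in the positive recurrent and transient cases), the similarities $\wh G=WRW^{-1}$ and $\wh R=W^{-1}GW$ immediately transfer the spectra. This establishes the first sentence of the statement in the recurrent/transient cases; the null recurrent case ($|\xi_n|=|\xi_{n+1}|=1$) has to be handled by a limiting/perturbation argument or by citing \cite[Theorem 5.20]{blm:book} directly, since Theorem~\ref{thm:w} part 3 does not apply there.

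Next I would prove that $\xi_n$ and $\xi_{n+1}$ are real and positive. Since $G\ge 0$, Perron--Frobenius gives that $\rho(G)=|\xi_n|$ is itself an eigenvalue of $G$, hence a root of $B(z)$; thus there is a real nonnegative root of modulus $|\xi_n|$, which forces $\xi_n$ (the root of $n$-th smallest modulus, taken real nonnegative) to equal $\rho(G)\ge 0$. Similarly $\rho(R)=|\xi_{n+1}|^{-1}$ is an eigenvalue of $R\ge 0$, so $\xi_{n+1}^{-1}$ may be taken real positive, i.e.\ $\xi_{n+1}=1/\rho(R)>0$. The key quantitative input is that $B(1)e=0$ with $e>0$, so $\xi=1$ is always a root; drift analysis of the QBD (the mean drift $\mu = v^T(A_1-A_{-1})e$ where $v^T$ is the stationary vector of $A_{-1}+A_0+A_1$, using Assumption~\ref{a:zero}) then pins down where $1$ sits relative to $\xi_n$ and $\xi_{n+1}$: positive recurrent iff $\mu<0$ iff $\xi_n=1<\xi_{n+1}$; null recurrent iff $\mu=0$ iff $\xi_n=1=\xi_{n+1}$ (a double root at $1$); transient iff $\mu>0$ iff $\xi_n<1=\xi_{n+1}$. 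I would cite the standard QBD theory (e.g.\ \cite{lr99,blm:book,neuts81}) for the equivalence between the classification and the sign of $\mu$, since reproving it is outside the scope here.

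Finally I would address the stochasticity claims. For $G$: it is classical (and follows from Assumptions~\ref{a:zero}--\ref{a:one}) that in the recurrent case $G$ is stochastic, $Ge=e$, so $\rho(G)=1$; in the transient case $G$ is strictly substochastic with $\rho(G)=\xi_n<1$. For $\wh G$: note that $A_{-1}+A_0+A_1$ stochastic means $(A_{-1}+(A_0-I)+A_1)e=0$, and one checks that $\wh G$ is the minimal nonnegative solution of $A_{-1}X^2+(A_0-I)X+A_1=0$, which is the $G$-type matrix of the QBD obtained by swapping the roles of up and down transitions (the "reversed" or "dual" QBD); its drift is $-\mu$, so it is recurrent exactly when the original is transient and vice versa, giving: $\wh G$ stochastic in the null recurrent and transient cases, substochastic in the positive recurrent case. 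The main obstacle is the null recurrent case: there $G$ and $\wh G$ are both stochastic with $\rho=1$ and $1$ is a double root, so the clean separation $|\xi_n|<|\xi_{n+1}|$ fails and Theorem~\ref{thm:w} part 3 is unavailable; I expect to handle it by a $t$-shift perturbation as in the proof of Theorem~\ref{thm:w} (replacing $B(z)$ by $B(tz)$ to separate the roots, passing to the limit $t\to 1$) together with continuity of the Perron root, or simply by invoking \cite[Theorem 5.20]{blm:book} for that part and noting that the stochasticity of $G$ and $\wh G$ there is the standard recurrence statement for the QBD and its dual.
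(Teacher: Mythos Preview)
Your proposal is correct and follows the same route as the paper: the paper does not give an independent proof of this theorem but simply invokes Theorem~\ref{thm:w} together with \cite[Theorem 5.20]{blm:book}, and you do exactly that, supplementing the citation with the standard Perron--Frobenius and drift arguments from \cite{lr99,neuts81}. Your treatment is in fact more detailed than the paper's (which gives no argument at all), and the only minor slip is writing $\rho(G)\ge 0$ where the irreducibility assumptions actually force $\rho(G)>0$.
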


As a consequence of the above theorem we have $\rho_G=\rho_{\wh R}=\xi_n>0$, $\rho_R=\rho_{\wh G}=\xi_{n+1}^{-1}>0$. This way, since $A(z)=A_{-1}+zA_0+z^2A_1$ is irreducible and nonnegative for $z>0$, we find that
$u_G$, $u_{\wh G}$ are the positive Perron vectors of $A(\xi_n)$ and $A(\xi_{n+1}^{-1})$, respectively.  Similarly, $v_R$ and $v_{\wh R}$ are positive left Perron vectors of
 $A(\xi_{n+1}^{-1})$ and $A(\xi_n)$, respectively.
 Moreover, if $\xi_n=\xi_{n+1}$, then $u_G=u_{\wh G}=e$, , where $e$ is the vector of all ones, and $v_R=v_{\wh R}$ with $v_R^T(A_{-1}+A_0+A_1)=v_R^T$.
Under Assumption \ref{a:one}, according to
\cite[Section 4.7]{blm:book}, it follows that 1 is the only root of $B(z)$
of modulus 1.

Since $G$ and $R$ solve the first two equations in \eqref{eq:4eq}, we find that
\begin{equation}\label{lf}
\varphi(z)=(I-zR)K(I-z^{-1}G),~~K=A_0-I+A_1G=A_0-I+RA_{-1}.
\end{equation}
 Similarly, since $\wh G$ and $\wh R$ solve the last two equations in \eqref{eq:4eq}, we have
\begin{equation}\label{rf}
  \varphi(z^{-1})=(I-z \wh R)\wh K(I-z^{-1}\wh G),~~\wh K=A_0-I+A_{-1}\wh G=A_0-I+\wh RA_{1}.
\end{equation}

In view of Theorem \ref{thm:spprop} the decompositions \eqref{lf}  and
\eqref{rf} are weak canonical factorizations of $\varphi(z)$ and $\varphi(z^{-1})$,
respectively.
From \eqref{lf} and \eqref{rf} we have
\begin{equation}\label{eq:several}
\begin{split}
&A_1=-RK=-\wh K\wh G,\qquad A_{-1}=-KG=-\wh R\wh K,\\
& A_1G=RA_{-1},\qquad \qquad A_{-1}\wh G=\wh RA_1.
\end{split}
\end{equation}

The following result provides some properties of the matrices involved in the above equations.

\begin{theorem}\label{prop:4}
The following properties hold:
\begin{enumerate}
\item $-K$ and $-\wh K$ are nonsingular M-matrices;
\item $v_G^T K^{-1} u_R<0$ and $v_{\wh G}^T {\wh K}^{-1} u_{\wh R}<0$;
\item the series $\sum_{i=0}^\infty G^iK^{-1}R^i$ and $\sum_{i=0}^\infty \wh G^i{\wh K}^{-1}\wh R^i$ are convergent if and only if the QBD is not null recurrent.
\end{enumerate}
\end{theorem}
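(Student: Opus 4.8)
The plan is to establish Part~1 first and then deduce Parts~2 and~3 from it, using the factorizations \eqref{lf}, \eqref{rf}, \eqref{eq:several} and the spectral description of Theorem~\ref{thm:spprop}. For Part~1, I would use that, by Theorem~\ref{thm:spprop}, the decompositions \eqref{lf} and \eqref{rf} are weak canonical factorizations, so their left factors $(I-zR)K$ and $(I-z\wh R)\wh K$ are invertible for $|z|<1$, and evaluating at $z=0$ forces $K$ and $\wh K$ to be nonsingular. Now $-K=I-(A_0+A_1G)$, where $A_0+A_1G\ge 0$ is substochastic, because $Ge\le e$ (Theorem~\ref{thm:spprop}) gives $(A_0+A_1G)e\le A_0e+A_1e\le e$; hence $\rho(A_0+A_1G)\le 1$. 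Since $-K=I-(A_0+A_1G)$ is nonsingular, $1$ is not an eigenvalue of $A_0+A_1G$, and as the spectral radius of a nonnegative matrix is itself an eigenvalue, $\rho(A_0+A_1G)<1$, so $-K$ is a nonsingular M-matrix; the same argument with $-\wh K=I-(A_0+A_{-1}\wh G)$ and $\wh Ge\le e$ handles $\wh K$.

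\emph{Part~2.} From Part~1, $K^{-1}=-(-K)^{-1}\le 0$, hence $v_G^TK^{-1}u_R\le 0$, and only strict negativity remains. Inverting \eqref{lf} gives $(-K)^{-1}=(I-z^{-1}G)(-\varphi(z))^{-1}(I-zR)$ wherever $\varphi(z)$ is invertible, and since $v_G^TG=\rho_Gv_G^T$ and $Ru_R=\rho_Ru_R$ this yields
\[
v_G^TK^{-1}u_R=-(1-z^{-1}\rho_G)(1-z\rho_R)\,v_G^T(-\varphi(z))^{-1}u_R .
\]
If the QBD is positive recurrent or transient, then $\rho_G=\xi_n<\xi_{n+1}=1/\rho_R$, and for any real $t$ with $\rho_G<t<1/\rho_R$ the scalar prefactor is positive while $-\varphi(t)=I-t^{-1}A(t)$, with $A(t)=A_{-1}+tA_0+t^2A_1$, is a nonsingular M-matrix: $A(t)$ is nonnegative and, by Assumption~\ref{a:zero}, irreducible for $t>0$, and $\rho(A(t))<t$ since $t\mapsto\rho(A(t))$ is log-convex in $\log t$ and equals $t$ only at the two central roots $\xi_n,\xi_{n+1}$. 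Hence $(-\varphi(t))^{-1}>0$ entrywise, so $v_G^T(-\varphi(t))^{-1}u_R>0$ (the vectors $v_G,u_R\ge 0$ being nonzero), and $v_G^TK^{-1}u_R<0$.

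\emph{Part~2, null recurrent case (the main obstacle), and the hatted inequality.} When the QBD is null recurrent the interval above is empty ($\rho_G=\rho_R=1$), so I would let $t\to 1$ in the displayed identity. The prefactor $(1-t^{-1})(1-t)=-(t-1)^2/t$ has a double zero at $t=1$, and the hard point is to show that $(-\varphi(t))^{-1}$ has a pole of order exactly two there, with the right sign. This uses that $\varphi(1)=A_{-1}+A_0-I+A_1$ has one-dimensional kernel spanned by $e$ and left kernel spanned by the positive stationary vector $v$ of $A_{-1}+A_0+A_1$ (equal here to $v_R=v_{\wh R}$), together with the fact that null recurrence is exactly the vanishing of the drift $v^T\varphi'(1)e=v^T(A_1-A_{-1})e$: consequently the simple eigenvalue $\lambda(t)=1-\rho(A(t))/t$ of $-\varphi(t)$ near $0$ satisfies $\lambda(1)=\lambda'(1)=0$ and $\lambda''(1)<0$ (the minimum of $\rho(A(t))/t$ at $t=1$ being non-degenerate), so $(-\varphi(t))^{-1}\sim\frac{2}{\lambda''(1)(t-1)^2}\cdot\frac{ev^T}{v^Te}$ as $t\to 1$, and passing to the limit gives $v_G^TK^{-1}u_R=\frac{2(v_G^Te)(v^Tu_R)}{\lambda''(1)\,v^Te}<0$. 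For $v_{\wh G}^T\wh K^{-1}u_{\wh R}<0$ I would apply the same argument to the reversed polynomial $z^2B(1/z)=A_1+z(A_0-I)+z^2A_{-1}$, which interchanges the roles of $A_{-1}\leftrightarrow A_1$, $G\leftrightarrow\wh R$, $R\leftrightarrow\wh G$, $K\leftrightarrow\wh K$ and of the roots $\xi_j\leftrightarrow 1/\xi_{2n+1-j}$, so that everything above transfers to the hatted quantities.

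\emph{Part~3.} If the QBD is positive recurrent or transient then $|\xi_n|<|\xi_{n+1}|$ by Theorem~\ref{thm:spprop}, so Theorem~\ref{thm:w}(3), applied to $\varphi(z)$ and (for the hatted series) to the reversed polynomial, gives convergence of $\sum_{i\ge 0}G^iK^{-1}R^i$ and $\sum_{i\ge 0}\wh G^i\wh K^{-1}\wh R^i$. Conversely, if the QBD is null recurrent then $G$ is stochastic and $\rho_G=\rho_R=1$, so $v_G^TG^i=v_G^T$ and $R^iu_R=u_R$ for all $i$; hence $v_G^T\big(\sum_{i=0}^{N}G^iK^{-1}R^i\big)u_R=(N+1)\,v_G^TK^{-1}u_R\to-\infty$ by Part~2, and the series diverges. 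The identical computation with $\wh G$ (stochastic, $\rho_{\wh G}=1$) and $\wh R$ ($\rho_{\wh R}=1$) shows $\sum_{i\ge 0}\wh G^i\wh K^{-1}\wh R^i$ diverges as well. The only delicate step is the null recurrent instance of Part~2: unlike the other cases it cannot be obtained from positivity of an M-matrix inverse on an open annulus, and one has to justify the order-two pole of $\varphi(t)^{-1}$ at $t=1$ and the sign of its leading coefficient.
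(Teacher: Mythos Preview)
Your proof is correct. Parts~1 and~3 match the paper's argument closely (the paper uses the Perron vector $u_G$ rather than $e$ in Part~1, but the idea is identical; and its Part~3 is exactly your divergence test via $v_G^TK^{-1}u_R<0$). For Part~2, however, you take a genuinely different route.

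The paper proves Part~2 in the Appendix by a purely combinatorial accessibility argument: it decomposes $R$ and $G$ into their irreducible/strictly-triangular blocks (via \cite[Theorems~7.2.1--7.2.2]{lr99}), interprets $(-K^{-1})_{ij}$ as an expected number of visits within a level before leaving it downwards, and shows by contradiction that $v_G^TK^{-1}u_R=0$ would force an entire subset $\ZZ\times\s_a$ of the doubly infinite QBD to have no path to $\ZZ\times\s_1$, contradicting Assumption~\ref{a:one}. This argument treats the positive-recurrent, null-recurrent and transient cases uniformly and never needs to distinguish them.

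Your approach is analytical: the factorization identity
\[
v_G^TK^{-1}u_R=-(1-z^{-1}\rho_G)(1-z\rho_R)\,v_G^T(-\varphi(z))^{-1}u_R
\]
together with the strict positivity of the inverse of the irreducible $M$-matrix $-\varphi(t)$ for $t\in(\xi_n,\xi_{n+1})$ settles the non-null-recurrent case in one line. For the null-recurrent case you need the additional input that $z=1$ is a root of $\det B(z)$ of multiplicity \emph{exactly} two, equivalently $\mu''(1)>0$ for $\mu(t)=\rho(A(t))$. This is what the paper states in its introduction and what underlies the double-shift construction; it follows from Assumption~\ref{a:one} together with the block structure of $G$ and $R$ recalled in the Appendix (which forces $\xi_n=1$ to be a simple eigenvalue of $G$ and $\xi_{n+1}^{-1}=1$ a simple eigenvalue of $R$, hence $|\xi_{n-1}|<1<|\xi_{n+2}|$). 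You should invoke this explicitly rather than asserting a ``non-degenerate minimum.'' With that addition your perturbation expansion is valid and gives the sign. The trade-off: your method makes the dependence on the root geometry completely transparent and is very short when $\xi_n<\xi_{n+1}$, at the cost of a delicate limit in the null-recurrent case; the paper's method avoids any analytic limit and uses only the Markov-chain path structure, but requires a longer case analysis.
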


\begin{proof}
The matrix $U=A_0+A_1G$ is nonnegative and 
\[
Uu_G=(A_0+A_1G)u_G=\rho_G^{-1}(A_0G+A_1G^2)u_G.
\]
 Since $G$ solves the equation \eqref{eq:uqme},
then
$Uu_G=\rho_G^{-1}(G-A_{-1})u_G\le u_G$. Since $u_G>0$, this latter inequality implies that $\rho(U)\le 1$; moreover, $\rho(U)$ cannot be one otherwise $K$ would be singular, and from \eqref{lf} the polynomial $\det B(z)$ would be identically zero. Hence, $-K$ is a nonsingular M-matrix. Similarly, $-\wh K$ is a nonsingular M-matrix.
The proof of part 2 is rather technical and  is reported in the Appendix.
Concerning part 3, consider the series $\sum_{i=0}^\infty G^iK^{-1}R^i$. Since the matrix
$-K$ is a nonsingular M-matrix, one has $K^{-1}\le 0$, and
the series has nonpositive terms since $G\ge 0$ and $R\ge 0$. In the null recurrent case
$\rho(R)=\rho(G)=1$, therefore the series diverges since $v_G^T K^{-1} u_R<0$. In the other cases, the powers of $G$ and $R$ are uniformly bounded, and
one of the matrices $G$ and $R$ has spectral radius less than one, therefore the
series is convergent.
\end{proof}

In the non null recurrent case, the matrices $G$ and $\wh R$ on the
one hand, $\wh G$ and  $R$ on the other hand, are related through 
the series $W=\sum_{i=0}^\infty G^iK^{-1}R^i$ as indicated by  Part 3 of Theorem \ref{thm:w}.

\section{Shifting techniques for QBDs}\label{sec:shift}

The shift technique presented in this paper may be seen as an
extension, to matrix polynomials, of the following result  due to Brauer \cite{brauer}:

\begin{theorem}\label{thm:brauer}
Let $A$ be an $n\times n$ matrix with eigenvalues $\lambda_1,\ldots,\lambda_n$.
Let $x_k$ be an eigenvector of $A$ associated with the eigenvalue $\lambda_k$, $1\le k\le n$, and
let $q$ be any $n$-dimensional vector. The matrix 
$A + x_k q^T$ has eigenvalues
$\lambda_1,\ldots,\lambda_{k-1},\lambda_k+x_k^Tq,\lambda_{k+1},\ldots,\lambda_n$. 
\end{theorem}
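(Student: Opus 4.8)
The plan is to prove the statement by reducing $A$ to block upper triangular form via a similarity that aligns the eigenvector $x_k$ with the first coordinate axis, and then observing that the rank‑one update affects only one row of the transformed matrix. Since $x_k\neq 0$, I would first pick any nonsingular matrix $S$ whose first column is $x_k$, so that $Se_1=x_k$ and $S^{-1}x_k=e_1$, where $e_1$ denotes the first standard basis vector. From $Ax_k=\lambda_k x_k$ the first column of $S^{-1}AS$ equals $\lambda_k e_1$, hence
\[
S^{-1}AS=\begin{bmatrix}\lambda_k & c^T\\ 0 & B\end{bmatrix}
\]
for some $(n-1)$-vector $c$ and some $(n-1)\times(n-1)$ matrix $B$. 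Taking determinants in $\lambda I-S^{-1}AS$ gives the factorization $\det(\lambda I-A)=(\lambda-\lambda_k)\det(\lambda I-B)$, so the eigenvalues of $B$ are exactly the multiset $\{\lambda_1,\dots,\lambda_n\}$ with one copy of $\lambda_k$ removed, that is, $\lambda_1,\dots,\lambda_{k-1},\lambda_{k+1},\dots,\lambda_n$.

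Next I would apply the same similarity to the perturbed matrix. Setting $\widetilde q=S^Tq$, one has
\[
S^{-1}(A+x_kq^T)S=S^{-1}AS+(S^{-1}x_k)(q^TS)=S^{-1}AS+e_1\widetilde q^{\,T},
\]
and adding $e_1\widetilde q^{\,T}$ alters only the first row of $S^{-1}AS$, leaving the last $n-1$ rows — in particular the zero block below $\lambda_k$ and the block $B$ — untouched. Hence $S^{-1}(A+x_kq^T)S$ is again block upper triangular with trailing block $B$ and leading diagonal entry $\lambda_k+\widetilde q_1$, where $\widetilde q_1=e_1^T\widetilde q=e_1^TS^Tq=(Se_1)^Tq=x_k^Tq$. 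Its characteristic polynomial is therefore $(\lambda-\lambda_k-x_k^Tq)\det(\lambda I-B)$, which gives precisely the list $\lambda_1,\dots,\lambda_{k-1},\lambda_k+x_k^Tq,\lambda_{k+1},\dots,\lambda_n$.

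The only point requiring a little care is the bookkeeping of multiplicities when $\lambda_k$ is a repeated eigenvalue of $A$: one must be sure that exactly one copy of $\lambda_k$ is ``carried'' by the $(1,1)$ entry while the remaining copies stay inside $B$. This is exactly what the identity $\det(\lambda I-A)=(\lambda-\lambda_k)\det(\lambda I-B)$ delivers, so no extra hypothesis (such as simplicity of $\lambda_k$ or diagonalizability of $A$) is needed. As an alternative, purely computational route one can avoid similarities altogether and use the matrix determinant lemma: for $\lambda$ outside the spectrum of $A$ one has $(A-\lambda I)^{-1}x_k=(\lambda_k-\lambda)^{-1}x_k$, whence
\[
\det\bigl(A+x_kq^T-\lambda I\bigr)=\det(A-\lambda I)\Bigl(1+q^T(A-\lambda I)^{-1}x_k\Bigr)=\Bigl(\prod_{i\neq k}(\lambda_i-\lambda)\Bigr)\bigl(\lambda_k+x_k^Tq-\lambda\bigr);
\]
both sides are polynomials in $\lambda$ that agree off a finite set, hence identically, and the claimed eigenvalue list can be read off directly.
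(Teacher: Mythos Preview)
Your proof is correct. Both routes you give --- the similarity that aligns $x_k$ with $e_1$, and the matrix determinant lemma --- are standard and complete arguments for Brauer's theorem, and your handling of multiplicities via the factorization $\det(\lambda I-A)=(\lambda-\lambda_k)\det(\lambda I-B)$ is sound.

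There is, however, nothing to compare against: the paper does not prove this theorem. It is quoted as a classical result due to Brauer, with a citation to the original 1952 paper, and is used only as a tool in later proofs (e.g., Theorems~\ref{thm:wcfnr} and~\ref{thm:lfnr}). So your write-up stands on its own as a self-contained proof that the paper simply omits.
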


The matrix polynomial $B(z)=A_{-1}+z(A_0-I)+z^2A_1$ has always a root on the unit circle, namely
$z=1$. This implies that $\varphi(z)=z^{-1}B(z)$ is not invertible on the unit circle and has only a weak canonical factorization (see formulas \eqref{lf} and \eqref{rf}). 
In this section we revisit in functional form the shift technique introduced in \cite{hmr01}.
Starting from $\varphi(z)$ 
we construct a new Laurent matrix polynomial
$\varphi_\xs(z)$ such that the roots of $B_\xs(z)=z\varphi_\xs(z)$ coincide
with the roots of $B(z)$ except for one root, which is shifted away to zero or to infinity. Therefore we may apply this technique to remove the singularities on the unit circle. 
This can be performed in two different ways: by operating to the right of $\varphi(z)$ or operating to the left. We treat separately the two cases.

\subsection{Shift to the right}
Our aim in this section is to shift the root $\xi_n$ of $B(z)$ to
zero. To this end, we multiply $\varphi(z)$ on the right by a suitable matrix function.

Take $Q=u_Gv^T$, where $v$ is any vector such that $u_G^Tv=1$.
Define
\begin{equation}\label{shift}
\varphi_\xr(z)=\varphi(z)\left(I+\frac {\xi_n}{z-\xi_n}Q\right). 
\end{equation}
where the suffix $\xr$ denotes shift to the Right.
We prove the following:

\begin{theorem}\label{thm:rs}
The function $\varphi_{\xr}(z)$ defined in \eqref{shift} coincides with the Laurent matrix polynomial $\varphi_{\xr}(z)=z^{-1}A^\xr_{-1}+A^\xr_0-I+z  A^\xr_1$ with matrix coefficients
\begin{equation}\label{eq:coeds0}
A^\xr_{-1}=A_{-1}(I-Q),~~ A^\xr_0=A_0+\xi_nA_1Q ,~~
 A^\xr_1=A_1.
\end{equation}
Moreover, the roots of $B_{\xr}(z)=z\varphi_\xr(z)$ are $0,\xi_1,\ldots,\xi_{n-1},\xi_{n+1},\ldots,\xi_{2n}$.
\end{theorem}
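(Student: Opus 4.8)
The plan is to verify the two assertions separately: first that the rational matrix function $\varphi_\xr(z)$ is in fact a Laurent polynomial with the stated coefficients, and then that its roots are obtained from those of $B(z)$ by moving $\xi_n$ to $0$.

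\medskip\noindent\emph{Step 1: the coefficients.} I would expand \eqref{shift} directly. Since $Q^2 = u_G v^T u_G v^T = (v^Tu_G) u_G v^T = Q$, the factor on the right is a rational function of $z$ whose only possible pole is at $z=\xi_n$. Writing $\varphi(z) = z^{-1}B(z)$ and using the weak canonical factorization \eqref{lf}, or more directly using $B(\xi_n)u_G = 0$ (equivalently $\varphi(\xi_n)u_G = 0$, which holds because $u_G$ is the Perron vector of $A(\xi_n)$ and $\xi_n=\rho_G$), the apparent pole at $z=\xi_n$ cancels: $\varphi(z)Q = \varphi(z)u_G v^T$ vanishes to first order at $z=\xi_n$, so $\frac{\xi_n}{z-\xi_n}\varphi(z)Q$ is analytic there. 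Hence $\varphi_\xr(z)$ is a Laurent polynomial of the same support $\{-1,0,1\}$ as $\varphi(z)$. To read off the coefficients I would compute $\varphi(z)\bigl(I + \frac{\xi_n}{z-\xi_n}Q\bigr) = \varphi(z) + \frac{\xi_n}{z-\xi_n}\varphi(z)Q$ and use that $\varphi(z)Q = (z^{-1}A_{-1} + A_0 - I + zA_1)u_Gv^T$; since $\varphi(\xi_n)u_G=0$ gives $A_{-1}u_G = -\xi_n(A_0-I)u_G - \xi_n^2 A_1 u_G$, one finds $\frac{\xi_n}{z-\xi_n}\varphi(z)u_G = -z^{-1}A_{-1}u_G + \xi_n A_1 u_G$ (a short calculation: the quotient of the degree-shifted polynomial $z\varphi(z)u_G = B(z)u_G$, which has $\xi_n$ as a root, by $z-\xi_n$). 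Multiplying by $v^T$ and collecting powers of $z$ yields exactly $A^\xr_{-1} = A_{-1} - A_{-1}u_Gv^T = A_{-1}(I-Q)$, $A^\xr_0 - I = (A_0 - I) + \xi_n A_1 u_G v^T$, i.e. $A^\xr_0 = A_0 + \xi_n A_1 Q$, and $A^\xr_1 = A_1$, as claimed.

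\medskip\noindent\emph{Step 2: the roots.} Here I would argue via determinants. From \eqref{shift}, $B_\xr(z) = z\varphi_\xr(z) = B(z)\bigl(I + \frac{\xi_n}{z-\xi_n}Q\bigr)$, hence
\[
\det B_\xr(z) = \det B(z)\cdot \det\!\Bigl(I + \tfrac{\xi_n}{z-\xi_n}Q\Bigr).
\]
Because $Q$ is rank one with $\operatorname{tr} Q = v^Tu_G = 1$, the matrix determinant lemma gives $\det\bigl(I + \frac{\xi_n}{z-\xi_n}Q\bigr) = 1 + \frac{\xi_n}{z-\xi_n} = \frac{z}{z-\xi_n}$. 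Therefore $\det B_\xr(z) = \frac{z}{z-\xi_n}\det B(z)$. Since $\det B(z) = c\prod_{i=1}^{2n}(z-\xi_i)$ up to the appropriate bookkeeping for roots at infinity (the leading coefficient is $\det A_1$, possibly singular), multiplying by $\frac{z}{z-\xi_n}$ removes one factor $(z-\xi_n)$ and inserts a factor $z$; thus the roots of $B_\xr(z)$ are $0, \xi_1, \dots, \xi_{n-1}, \xi_{n+1}, \dots, \xi_{2n}$. One must also check that the degree/leading behaviour is consistent, i.e. that $A^\xr_1 = A_1$ has the same rank so that the count of roots at infinity is unchanged and that $\xi_n$ really had multiplicity one as needed when $\xi_n = 1$ is not a double root; this is exactly where Assumption \ref{a:one} and Theorem \ref{thm:spprop} are used (in the positive recurrent case $\xi_n = 1 < \xi_{n+1}$, a simple root).

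\medskip\noindent\emph{Main obstacle.} The routine part is the determinant computation; the one genuine subtlety is the pole cancellation in Step 1 — one must be careful that $\varphi(z)Q$ vanishes at $z=\xi_n$ \emph{only to first order is not needed}, only that it vanishes, but to extract the coefficients cleanly one divides $B(z)u_G$ by $z-\xi_n$ and must track that this quotient is a genuine (vector) polynomial of degree one, which relies on $B(\xi_n)u_G = 0$. A secondary point worth a sentence is well-definedness: the identity $\varphi(\xi_n)u_G = 0$ uses $\xi_n = \rho_G$ and that $u_G$ is a Perron eigenvector of $A(\xi_n)$, which is supplied by Theorem \ref{thm:spprop} and the discussion following it. Everything else is bookkeeping with the factored form $B_\xr(z) = B(z)(I + \frac{\xi_n}{z-\xi_n}Q)$.
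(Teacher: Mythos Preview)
Your proof is correct and follows essentially the same route as the paper: both arguments use $B(\xi_n)u_G=0$ to divide $B(z)Q$ by $z-\xi_n$ and read off the coefficients, and both obtain the roots via the rank-one determinant identity $\det\bigl(I+\tfrac{\xi_n}{z-\xi_n}Q\bigr)=\tfrac{z}{z-\xi_n}$. One minor remark: your closing comments about needing $\xi_n$ to be simple and invoking Assumption~\ref{a:one} and Theorem~\ref{thm:spprop} are unnecessary---the determinant identity automatically replaces exactly one copy of $\xi_n$ by $0$ regardless of multiplicity, and the theorem holds in all three (positive recurrent, null recurrent, transient) cases without those extra hypotheses.
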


\begin{proof}
Since $\xi_n=\rho_G$ and $B(\xi_n)u_G=0$, then $A_{-1}Q=-\xi_n(A_0-I)Q-\xi_n^2A_1Q$, and  we have
\[
\begin{split}
B(z)Q
&=-\xi_n(A_0-I)Q-\xi_n^2A_1Q+(A_0-I)Qz+A_1Qz^2\\
&=(z^2-\xi_n^2)A_1Q+(z-\xi_n)(A_0-I)Q.
\end{split}
\]
 This way we find that 
$
\frac {\xi_n}{z-\xi_n}B(z)Q=\xi_n(z+\xi_n)A_1Q+\xi_n(A_0-I)Q$, therefore
\[
\varphi_{\xr}(z)=\varphi(z)+\frac {\xi_n}{z-\xi_n}\varphi(z)Q=z^{-1}A^\xr_{-1}+A^\xr_0-I+A^\xr_1z
\]
so that \eqref{eq:coeds0} follows.
As $\det (I+\frac {\xi_n}{z-\xi_n}Q)=\frac z{z-\xi_n}$,
 we have  from \eqref{shift} that $\det B_{\xr}(z)=\frac z{z-\xi_n}\det B(z) $. This means that the roots  of the polynomial 
$\det B_{\xr}(z)$ coincide with the roots of $\det B(z)$ except for $\xi_n$ which is replaced with 0.
\end{proof}

We analyze the consequences of the above theorem.
In the positive recurrent case, where $\xi_n=1<\xi_{n+1}$, 
 the matrix polynomial $B_\xr(z)$ has $n$ roots of modulus strictly less than 1, and $n$ of modulus strictly greater than 1; in particular, $B_{\xr}(z)$ is nonsingular on the unit circle and on the annulus $|\xi_{n-1}|<|z|<\xi_{n+1}$.
In the null recurrent case, where  $\xi_n=1=\xi_{n+1}$, the matrix polynomial $B_{\xr}(z)$ has $n$ roots of modulus strictly less than 1, and $n$ of modulus greater than or equal to 1; in particular, $B_{\xr}(z)$ has a simple root at $z=1$.
In the transient case, where $\xi_n<1=\xi_{n+1}$, the splitting of the
roots with respect to the unit circle is not changed, since $B_{\xr}(z)$ has, like $B(z)$, 
 $n$ roots of modulus strictly less than 1, and $n$ of modulus greater than or equal to 1.

It is worth pointing out that in the recurrent case the vector $u_G$ is the vector of all ones and $\xi_n=1$, therefore the quantities involved in the construction of the matrix polynomial $B_{\xr}(z)$ are known {\em a priori}. 
In the transient case it is convenient to apply the shift to the root $\xi_{n+1}$, by moving it away to infinity. This is obtained by acting on $\varphi(z)$ to the left, as described in the next section.

\subsection{Shift to the left}\label{sec:sr}
Consider the matrix $S=wv_R^T$, where $w$ is any vector such that  $v_R^Tw=1$.  
Define the matrix function
\begin{equation}\label{shiftl}
\varphi_{\xl}(z)=\left(I-\frac z{z-\xi_{n+1}}S\right)\varphi(z),
\end{equation}
where the suffix $\xl$ denotes shift to the left.

\begin{theorem}\label{thm:ls}
The function $\varphi_{\xl}(z)$ defined in \eqref{shiftl} coincides with the Laurent matrix polynomial $\varphi_{\xl}(z)=
z^{-1}A^\xl_{-1}+A^\xl_0-I+z A^\xl_1$ with matrix coefficients
$
A^\xl_{-1}=A_{-1}$,
$ A^\xl_0=A_0+\xi_{n+1}^{-1}SA_{-1}$,
 $A^\xl_1=(I-S)A_1$.
Moreover, the roots of $B_{\xl}(z)=z\varphi_\xl(z)$ are $\xi_1,\ldots,\xi_{n},\xi_{n+2},\ldots,\xi_{2n},\infty$.
\end{theorem}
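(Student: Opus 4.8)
The plan is to mirror exactly the proof of Theorem~\ref{thm:rs} (the shift-to-the-right case), exploiting the left-right symmetry. First I would record the relevant facts about $\xi_{n+1}$: by Theorem~\ref{thm:spprop} we have $\xi_{n+1}=1/\rho_R>0$, and $v_R^T$ is a left Perron eigenvector of $B(z)$ at $z=\xi_{n+1}$, i.e.\ $v_R^T B(\xi_{n+1})=0$. Equivalently $v_R^T A_{-1} = -\xi_{n+1} v_R^T(A_0-I) - \xi_{n+1}^2 v_R^T A_1$. Multiplying on the left by $w$ (so that $S = w v_R^T$) gives $S A_{-1} = -\xi_{n+1} S(A_0-I) - \xi_{n+1}^2 S A_1$, which is the left-sided analogue of the identity $A_{-1}Q = -\xi_n(A_0-I)Q - \xi_n^2 A_1 Q$ used in Theorem~\ref{thm:rs}.

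Next I would compute $S B(z)$ directly: using the displayed identity for $S A_{-1}$,
\[
S B(z) = S A_{-1} + z\, S(A_0-I) + z^2 S A_1 = (z-\xi_{n+1}) S(A_0-I) + (z^2-\xi_{n+1}^2) S A_1,
\]
so that
\[
\frac{z}{z-\xi_{n+1}} S B(z) = z\, S(A_0-I) + z(z+\xi_{n+1}) S A_1 .
\]
Dividing by $z$ gives $\dfrac{z}{z-\xi_{n+1}} S \varphi(z) = S(A_0-I) + (z+\xi_{n+1}) S A_1$, a genuine (Laurent) polynomial in $z$ of the form $S(A_0 - I) + \xi_{n+1} S A_1 + z\, S A_1$. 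Subtracting this from $\varphi(z) = z^{-1}A_{-1} + (A_0-I) + z A_1$ according to \eqref{shiftl} yields
\[
\varphi_{\xl}(z) = z^{-1}A_{-1} + \bigl(A_0 - I - S(A_0-I) - \xi_{n+1} S A_1\bigr) + z\,(A_1 - S A_1).
\]
To match the claimed coefficients I would rewrite the constant term using $S(A_0-I) + \xi_{n+1} S A_1 = -\xi_{n+1}^{-1} S A_{-1}$ (which is just the Perron identity above solved for that combination), so the constant term becomes $A_0 - I + \xi_{n+1}^{-1} S A_{-1}$; hence $A^\xl_{-1}=A_{-1}$, $A^\xl_0 = A_0 + \xi_{n+1}^{-1} S A_{-1}$, $A^\xl_1 = (I-S)A_1$, as stated.

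Finally, for the roots: $S = w v_R^T$ has rank one with $v_R^T w = 1$, so $S$ has eigenvalue $1$ (eigenvector $w$) and $0$ otherwise, and $\det\bigl(I - \frac{z}{z-\xi_{n+1}} S\bigr) = 1 - \frac{z}{z-\xi_{n+1}} = \frac{-\xi_{n+1}}{z-\xi_{n+1}}$. Taking determinants in \eqref{shiftl} and multiplying by $z^n$ gives $\det B_{\xl}(z) = \frac{-\xi_{n+1}}{z-\xi_{n+1}} \det B(z)$. This rational function has the same zeros as $\det B(z)$ except that the factor $(z-\xi_{n+1})$ in $\det B(z)$ is cancelled and the degree drops by one, which in our root-counting convention (roots at infinity when the degree of $\det B_\xl(z)$ is less than $2n$) means the root $\xi_{n+1}$ is replaced by $\infty$; the remaining roots are $\xi_1,\ldots,\xi_n,\xi_{n+2},\ldots,\xi_{2n},\infty$. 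The only point needing a word of care is that $\det B(z)$ genuinely vanishes at $z=\xi_{n+1}$ to at least first order so the cancellation is legitimate, and that $\xi_{n+1}$ is a simple root of $\varphi$ restricted to its role here --- but this is exactly parallel to Theorem~\ref{thm:rs} and follows since $\frac{-\xi_{n+1}}{z-\xi_{n+1}}\det B(z)$ is a polynomial of degree $2n-1$ (the leading coefficient of $\det B(z)$ being that of $z^{2n}\det A_1$, and the rational factor tending to $0$ like $1/z$). I do not anticipate a real obstacle; the bookkeeping of which combination of coefficients to substitute in order to land on the stated form of $A^\xl_0$ is the only mildly delicate step.
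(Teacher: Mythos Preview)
Your proof is correct and follows essentially the same route as the paper's: both use the identity $SA_{-1}=-\xi_{n+1}S(A_0-I)-\xi_{n+1}^{2}SA_1$ to compute $\frac{z}{z-\xi_{n+1}}S\varphi(z)$ as a polynomial, subtract it from $\varphi(z)$ to read off the coefficients, and then take determinants in \eqref{shiftl} to track the roots. Your determinant computation $\det\bigl(I-\tfrac{z}{z-\xi_{n+1}}S\bigr)=\tfrac{-\xi_{n+1}}{z-\xi_{n+1}}$ is in fact the correct constant (the paper's displayed $-\tfrac{1}{z-\xi_{n+1}}$ omits the factor $\xi_{n+1}$, though this is immaterial for the location of the roots).
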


\begin{proof}
Since $\xi_{n+1}=\rho_R^{-1}$ and $v_R^TB(\rho_R^{-1})=0$,
then $SA_{-1}=-\xi_{n+1}S(A_0-I)-\xi_{n+1}^{2}SA_1$, and we have
\[
\begin{split}
SB(z)&=
-\xi_{n+1}S(A_0-I)-\xi_{n+1}^{2}SA_1+S(A_0-I)z+SA_1z^2  \\
&=   
(z^2-\xi_{n+1}^{2})SA_1+(z-\xi_{n+1})S(A_0-I).
\end{split}
\]
This way we find that $\frac z{z-\xi_{n+1}}S\varphi(z)=(z+\xi_{n+1})SA_1+S(A_0-I)$, therefore
\[
\varphi_{\xl}(z)=\varphi(z)-\frac z{z-\xi_{n+1}}S\varphi(z)=z^{-1}A^\xl_{-1}+A^\xl_0-I+zA^\xl_1
\]
with
\[
A^\xl_{-1}=A_{-1},~~ A^\xl_0=A_0+\xi_{n+1}^{-1}SA_{-1} ,~~
 A^\xl_1=(I-S)A_1.
\]
As $\det (I-\frac z{z-\xi_{n+1}}S)=-\frac {1}{z-\xi_{n+1}}$,
we have $\det B_{\xl}(z)= -\frac
{1}{z-\xi_{n+1}}\det B(z) $ from~\eqref{shiftl}. This means that the roots of the
matrix polynomial $ B_{\xl}(z)$ coincide with the roots of $
B(z)$ except the root equal to $\xi_{n+1}$ which has been
 moved to infinity.
\end{proof}

A consequence of the above theorem is that in the transient case, when $\xi_n<1=\xi_{n+1}$, the matrix polynomial $B_{\xl}(z)$ has $n$ roots of modulus strictly less than 1 and $n$ roots of modulus strictly greater than 1 (included the root(s) at the infinity). In particular, $\varphi_{\xl}(z)$ is invertible on the unit circle and on the annulus $\xi_{n}<|z|<|\xi_{n+2}|$.

The shift to the left applied to the function $\varphi(z)$ in order to move the root $\xi_{n+1}$ to the infinity, can be viewed as a shift to the right applied to the function $\wt\varphi(z)=\varphi^T(z^{-1})$ to move the root $\xi_{n+1}^{-1}$ to zero. In fact, observe that the roots of $z\wt\varphi(z)$ are the reciprocals of the roots of $B(z)$ so that the roots $\xi_n$ and $\xi_{n+1}$ of $B(z)$
play the role of the roots $\xi_{n+1}^{-1}$ and $\xi_n^{-1}$ of $z\wt\varphi(z)$ respectively.
From \eqref{shift} we have
\[
 \wt\varphi_{\xr}(z)=\wt\varphi(z)\left(I+\frac{\xi_{n+1}^{-1}}{z-\xi_{n+1}^{-1}}Q'\right)
\]
for $Q'=v_Rw^T$. Taking the transpose in both sides of the above equation yields
\[
\wt\varphi^T_{\xr}(z)=\left(I+\frac{\xi_{n+1}^{-1}}{z-\xi_{n+1}^{-1}}{Q'}^T\right)\wt\varphi(z)^T
.\]
Replacing  $z$ with $z^{-1}$ yields \eqref{shiftl}
where $\varphi_{\xl}(z)=\wt\varphi^T_{\xr}(z^{-1})$.

\subsection{Double shift}
The right and left shifts presented in the previous sections can be combined,
yielding the double shift technique, where the new quadratic matrix polynomial
$B_{\xd}(z)$ has the same roots of $B(z)$, except for $\xi_n$ and
$\xi_{n+1}$, which are shifted to 0 and to infinity, respectively.

By following the same arguments used in the previous sections, we define the matrix function
\begin{equation}\label{eq:ds}
\varphi_{\xd}(z)= \left(I-\frac z{z-\xi_{n+1}}S\right)\varphi(z)\left(I+\frac {\xi_n}{z-\xi_n}Q\right),
\end{equation}
where $Q=u_Gv^T$ and $S=wv_R^T$, with
$v$ and $w$ any vectors such that $u_G^Tv=1$ and $v_R^Tw=1$.  
From Theorems \ref{thm:rs} and \ref{thm:ls} we find that   $\varphi_{\xd}(z)=z^{-1} A^{\xd}_{-1}+ A_0^{\xd}-I+z A^{\xd}_1$, with matrix coefficients
\begin{equation}\label{eq:coeds}
\begin{split}
A_{-1}^{\xd}& =A_{-1}(I-Q),\\
A_0^{\xd} &=A_0+\xi_nA_1Q+\xi_{n+1}^{-1}SA_{-1}- \xi_{n+1}^{-1}SA_{-1}Q\\
  &= A_0+\xi_nA_1Q+\xi_{n+1}^{-1}SA_{-1}- \xi_nSA_{1}Q\\
 A_1^{\xd} &=(I-S)A_1.
\end{split}
\end{equation}
The two expressions for $A_0^{\xd}$ coincide since, from \eqref{lf}, one has $A_1G=RA_{-1}$, and therefore $\xi_nv_R^TA_{1}u_G=\xi_{n+1}^{-1}v_R^TA_{-1}u_G$. 

From Theorems \ref{thm:rs} and \ref{thm:ls} it follows that the matrix polynomial $B_{\xd}(z)=z\varphi_{\xd}(z)$ has roots
$0,\xi_1,\ldots,\xi_{n-1},\xi_{n+2},\ldots,\xi_{2n},\infty$. In
  particular, $\varphi_{\xd}(z)$ is nonsingular on the unit circle and on the annulus 
  $|\xi_{n-1}|<|z|<|\xi_{n+2}|$.

\section{Canonical factorizations}\label{sec:cf}
Consider the Laurent matrix polynomial $\varphi_{\xs}(z)$, for $s\in\{\xr,\xl,\xd\}$, 
where $\varphi_{\xs}(z)$ is obtained by applying one of the shift techniques
described in Section \ref{sec:shift}.  Our  goal in this section is to
show that $\varphi_{\xs}(z)$ and $\varphi_{\xs}(z^{-1})$ admit a (weak)
canonical factorization, and to determine relations between $G$, $R$, $\wh
G$ and $\wh R$, and the solutions
of the transformed equations

\begin{eqnarray}
&A^\xs_{-1}+(A^\xs_0-I)X+A^\xs_1X^2=0,\label{eq:gs}\\
&X^2 A^\xs_{-1}+X(A^\xs_0-I)+A^\xs_1=0,\label{eq:rs}\\
&A^\xs_{-1}X^2+(A^\xs_0-I)X+ A^\xs_{1}=0,\label{eq:ghs}\\
&A^\xs_{-1}+X (A^\xs_0-I)+ X^2 A^\xs_{1}=0.\label{eq:rhs}
\end{eqnarray}

\subsection{Shift to the right}
Consider the function $\varphi_{\xr}(z)$ obtained by shifting $\xi_n$ to
zero, defined in \eqref{shift}.  Independently of the
recurrent/transient case, the matrix Laurent polynomial $
\varphi_{\xr}(z)$ has a canonical factorization, as shown by
the following theorem.

\begin{theorem}\label{thm:frs}
Define  $Q=u_Gv^T$, where $v$ is any vector such that $u_G^Tv=1$.
The 
function $\varphi_{\xr}(z)$, defined in \eqref{shift}, has the
factorization
\[
\varphi_{\xr}(z) =(I-zR_\xr)K_\xr(I-z^{-1}G_{\xr}),
\]
where $G_{\xr}=G-\xi_n Q$,  $R_\xr=R$ and $K_\xr=K$. 
This factorization is canonical in the positive recurrent case, and weakly canonical otherwise.
Moreover, the eigenvalues
of $G_{\xr}$ are those of $G$, except for the eigenvalue $\xi_n$ which
is replaced by zero; the matrices $G_{\xr}$ and $R_\xr$ are the solutions with minimal spectral radius of the equations 
\eqref{eq:gs} and \eqref{eq:rs}, respectively.
\end{theorem}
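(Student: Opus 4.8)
The plan is to verify the claimed factorization by direct substitution, then read off the spectral and extremality properties from the earlier structural results. First I would compute the product $(I-zR_\xr)K_\xr(I-z^{-1}G_\xr) = (I-zR)K(I-z^{-1}(G-\xi_n Q))$. Expanding and using the known factorization \eqref{lf}, namely $\varphi(z)=(I-zR)K(I-z^{-1}G)$, the product equals $\varphi(z) + z^{-1}\xi_n (I-zR)KQ$. So the whole task reduces to checking that $z^{-1}\xi_n(I-zR)KQ = \frac{\xi_n}{z-\xi_n}\varphi(z)Q$, i.e. that the shift formula \eqref{shift} is reproduced. Using $K=A_0-I+A_1G$ and $KQ=(A_0-I)Q+A_1GQ$ together with $GQ = G u_G v^T = \xi_n u_G v^T = \xi_n Q$, we get $KQ=(A_0-I)Q+\xi_n A_1 Q$, and $RKQ = RA_{-1}Q\cdot(\text{something})$; more directly, from \eqref{eq:several} we have $RK=-A_1$, so $(I-zR)KQ = KQ - zRKQ = (A_0-I)Q + \xi_n A_1 Q + z A_1 Q$. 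Multiplying by $z^{-1}\xi_n$ and comparing with the expression $\frac{\xi_n}{z-\xi_n}\varphi(z)Q$ computed in the proof of Theorem~\ref{thm:rs} (which showed $\frac{\xi_n}{z-\xi_n}\varphi(z)Q = \xi_n(z+\xi_n)z^{-1}A_1Q + \xi_n z^{-1}(A_0-I)Q$) gives exactly the same thing. Hence the factorization identity holds as an identity of Laurent polynomials, and therefore on the annulus where both sides are defined.

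Next I would identify the eigenvalues of $G_\xr = G - \xi_n Q = G - \xi_n u_G v^T$. This is precisely the setting of Brauer's theorem (Theorem~\ref{thm:brauer}): $u_G$ is an eigenvector of $G$ for the eigenvalue $\xi_n=\rho_G$, and we subtract $u_G(\xi_n v)^T$, so the eigenvalue $\xi_n$ is shifted to $\xi_n - \xi_n v^T u_G = \xi_n - \xi_n = 0$, while all other eigenvalues of $G$ are unchanged. Combined with Theorem~\ref{thm:rs}, which tells us the roots of $B_\xr(z)$ are $0,\xi_1,\dots,\xi_{n-1},\xi_{n+1},\dots,\xi_{2n}$, the $n$ smallest-modulus roots of $B_\xr(z)$ are $0,\xi_1,\dots,\xi_{n-1}$, which are exactly the eigenvalues of $G_\xr$; so $\rho(G_\xr)=|\xi_{n-1}|$ equals the modulus of the $n$-th smallest root of $B_\xr$. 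Similarly $R_\xr=R$ has eigenvalues $\xi_{n+1}^{-1},\dots,\xi_{2n}^{-1}$, the reciprocals of the $n$ largest-modulus roots of $B_\xr$, so $\rho(R_\xr)=\xi_{n+1}^{-1}$.

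To get that the factorization is (weak) canonical and that $G_\xr$, $R_\xr$ are the minimal-spectral-radius solutions, I would invoke Theorem~\ref{thm:h0} (in the positive recurrent case, where $|\xi_{n-1}|<1<\xi_{n+1}$, so the hypothesis $|\xi_n^{\xr}|<1<|\xi_{n+1}^{\xr}|$ of that theorem is met with $G_\xr$ playing the role of $G$) and Theorem~\ref{thm:w} part~1 in the null recurrent and transient cases, where we only have $|\xi_{n-1}|\le 1\le \xi_{n+1}$ but the root splitting and the existence of the solution $G_\xr$ with $\rho(G_\xr)=|\xi_{n-1}|$ still give a weak canonical factorization with $R_\xr$ a solution of \eqref{eq:rs} of spectral radius $\xi_{n+1}^{-1}$. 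One should check that $G_\xr$ indeed solves \eqref{eq:gs}: this follows either by direct substitution using the coefficient formulas \eqref{eq:coeds0} and the relation $GQ=\xi_n Q$, $A_{-1}Q = -\xi_n(A_0-I)Q-\xi_n^2 A_1 Q$, or more cleanly by noting that the factorization $\varphi_\xr(z)=(I-zR)K(I-z^{-1}G_\xr)$ forces $G_\xr$ to be the ``$G$-solution'' associated with that factorization. Minimality of the spectral radius among all solutions then follows because any solution of \eqref{eq:gs} has eigenvalues among the roots of $B_\xr(z)$, and $\rho(G_\xr)$ already attains the smallest possible value $|\xi_{n-1}|$ consistent with an $n$-dimensional solution; likewise for $R_\xr$. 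The main obstacle I anticipate is purely bookkeeping: making sure the identity $GQ=\xi_n Q$ (equivalently $v^T u_G=1$) is used correctly and that in the non-positive-recurrent cases one correctly appeals to the weaker Theorem~\ref{thm:w} rather than Theorem~\ref{thm:h0}, since the latter's strict separation hypothesis fails there.
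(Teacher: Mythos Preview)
Your proposal is correct and follows essentially the same overall strategy as the paper: establish the factorization, identify the eigenvalues of $G_\xr$, and conclude (weak) canonicity and minimality from the root configuration of $B_\xr(z)$. The only notable difference is in the mechanics of the first step. You expand $(I-zR)K(I-z^{-1}G_\xr)$ and match the extra term $z^{-1}\xi_n(I-zR)KQ$ against the shift correction computed in Theorem~\ref{thm:rs}, which works but requires tracking several identities from \eqref{eq:several}. The paper instead observes in one line that, since $GQ=\xi_nQ$,
\[
(I-z^{-1}G)\Bigl(I+\tfrac{\xi_n}{z-\xi_n}Q\Bigr)=I-z^{-1}(G-\xi_nQ),
\]
and then just multiplies the known factorization \eqref{lf} on the right by the shift factor and invokes the definition \eqref{shift} of $\varphi_\xr$. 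This is shorter and sidesteps the coefficient-matching. For the eigenvalue claim the paper uses the determinant identity $\det(I-z^{-1}G_\xr)=\tfrac{z}{z-\xi_n}\det(I-z^{-1}G)$ rather than Brauer's theorem; both arguments are equivalent here. Your treatment of canonicity (Theorem~\ref{thm:h0} in the positive recurrent case, Theorem~\ref{thm:w} otherwise) and of minimality matches the paper's.
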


\begin{proof}
Since $GQ=\xi_nQ$, then $(I-z^{-1}G)(I+\frac {\xi_n}{z-\xi_n}Q)=I-z^{-1}(G-\xi_n Q)$.
Hence, from \eqref{lf} and \eqref{shift}, we find that
\[
\varphi_{\xr}(z)=(I-zR)K(I-z^{-1}G_{\xr}),~G_{\xr}=G-\xi_n Q,
\]
which proves the factorization of $\varphi_\xr(z)$.
Since $\det (I-z^{-1}G_{\xr})=\frac z{z-\xi_n}\det (I-z^{-1}G)$, then
the eigenvalues of $G_{\xr}$ are the eigenvalues of $G$, except for the
eigenvalue $\xi_n$ which is replaced by 0. Thus the factorization is
canonical in the positive recurrent case, weak canonical otherwise.
A direct inspection shows that $G_{\xr}$ and $R$ solve 
\eqref{eq:gs} and \eqref{eq:rs}, respectively.
 They are the solutions with minimal spectral radius  since their eigenvalues coincide with the $n$ roots with smallest modulus of $B_\xr(z)$ and of $zB_\xr(z^{-1})$, respectively.
\end{proof}

For the existence of the (weak) canonical factorization of $\varphi_{\xr}(z^{-1})$ we distinguish the null recurrent from the non null recurrent case.
In the latter case, since the matrix polynomial $B_{\xr}(z)$ is still singular on the unit circle, the function 
$\varphi_{\xr}(z^{-1})$  has a weak canonical factorization, as stated by the following theorem.

\begin{theorem}\label{thm:wcfnr}
Assume that $\xi_n=\xi_{n+1}=1$ (i.e., the QBD is null recurrent); define $Q=u_Gv_{\wh G}^T$,  where $v_{\wh G}^Tu_G=1$. Normalize $u_{\wh R}$ so that
 $v_{\wh G}^T\wh K^{-1}u_{\wh R}=-1$.   The function 
$\varphi_{\xr}(z)$,  defined in
\eqref{shift}, has the weak canonical factorization
\[
\varphi_{\xr}(z^{-1}) =(I-z\wh R_{\xr})\wh K_\xr(I-z^{-1}\wh G_{\xr})
\]
with 
\begin{eqnarray}
\wh R_{\xr}=\wh R+ u_{\wh R} v_{\wh G}^T \wh K^{-1},\label{eq:rht}\\
\wh K_{\xr}= \wh K-(u_{\wh R}+\wh K u_G)v_{\wh G}^T,\label{eq:kht}\\
\wh G_{\xr}=\wh G+(u_G+\wh K^{-1}u_{\wh R})v_{\wh G}^T. \label{eq:ght}
\end{eqnarray}
The eigenvalues of $\wh R_{\xr}$ are those of $\wh R$, except for the eigenvalue 1 which is replaced by 0; the eigenvalues of $\wh G_{\xr}$ are the same as the eigenvalues of $\wh G$. Moreover, the matrices $\wh G_{\xr}$ and $\wh R_{\xr}$ are the solutions of minimum spectral radius of \eqref{eq:ghs} and \eqref{eq:rhs}, respectively. 
\end{theorem}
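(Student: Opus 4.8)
The plan is to proceed exactly as in Part~3 of Theorem~\ref{thm:h0}, but working with the null recurrent data, which forces us to track one unit eigenvalue explicitly rather than appealing to convergence of $\sum_i G^i K^{-1}R^i$. First I would note that, by Theorem~\ref{thm:frs}, $\varphi_{\xr}(z)$ has the weak canonical factorization $(I-zR)K(I-z^{-1}G_{\xr})$ with $G_{\xr}=G-Q$ (since $\xi_n=1$), $R_{\xr}=R$, $K_{\xr}=K$; the matrix polynomial $B_{\xr}(z)$ has a simple root at $z=1$ coming from $R$, while $G_{\xr}$ has lost its unit eigenvalue. So the relevant splitting for $\varphi_{\xr}(z^{-1})$ is: $zB_{\xr}(z^{-1})$ has roots $\{0,\xi_{n+1}^{-1},\ldots\}$ together with one root at $1$ (the reciprocal of the simple root $1$ of $B_{\xr}$), and we want to peel that $1$ off into $\wh R_{\xr}$. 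The natural strategy is therefore a rank-one correction of the weak canonical factorization~\eqref{rf} of $\varphi(z^{-1})$, chosen so that the offending unit eigenvalue of $\wh R$ is shifted to $0$ and simultaneously the zero root introduced by the right shift on $G$ appears as an eigenvalue of $\wh G_{\xr}$.

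Concretely, I would guess the factors in the form $\wh R_{\xr}=\wh R + u_{\wh R}a^T$, $\wh G_{\xr}=\wh G + b\,v_{\wh G}^T$ for vectors $a,b$ to be determined, and $\wh K_{\xr}$ the corresponding middle factor, and then impose three things: (i) that $(I-z\wh R_{\xr})\wh K_{\xr}(I-z^{-1}\wh G_{\xr})$ actually equals $\varphi_{\xr}(z^{-1})=\varphi(z^{-1})+\frac{1}{z^{-1}-1}\varphi(z^{-1})Q$ as a Laurent polynomial identity; (ii) that $\wh R_{\xr}$ has the eigenvalue~$1$ of $\wh R$ replaced by~$0$, i.e. $v_{\wh G}^T$ (a left null vector of $\wh R$ at eigenvalue~$1$, after the normalization $v_{\wh G}^T\wh K^{-1}u_{\wh R}=-1$) is used to annihilate it; (iii) that the determinant bookkeeping $\det(I-z^{-1}\wh G_{\xr})$ vs.\ $\det(I-z^{-1}\wh G)$ produces exactly the extra zero root. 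Matching coefficients of $z^{-1},z^0,z^1$ in (i) — using $GQ=Q$, $\wh R u_{\wh R}=u_{\wh R}$ (since $\xi_{n+1}=1$ so $\rho_{\wh R}=1$... careful: here $\wh R$ has spectral radius $\rho_R=\xi_{n+1}^{-1}=1$, so $u_{\wh R}$ is its Perron vector), $Qe$-type relations, and the identities~\eqref{eq:several}, $A_{-1}\wh G=\wh R A_1$, $A_{-1}=-\wh R\wh K$, $A_1=-\wh K\wh G$ — should pin down $a=v_{\wh G}^T\wh K^{-1}$ and $b=u_G+\wh K^{-1}u_{\wh R}$, giving~\eqref{eq:rht}--\eqref{eq:ght}. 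I would verify the middle-factor formula~\eqref{eq:kht} either by direct substitution or, more cleanly, by checking $\wh K_{\xr}=A_0^{\xr}-I+A_{-1}^{\xr}\wh G_{\xr}=A_0^{\xr}-I+\wh R_{\xr}A_1^{\xr}$ using~\eqref{eq:coeds0}.

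Once the algebraic identity is in hand, the factorization is \emph{weak} canonical because: the product of the three factors equals $\varphi_{\xr}(z^{-1})$; $\rho(\wh G_{\xr})=\rho(\wh G)=\rho_R=\xi_{n+1}^{-1}=1$ with $\wh G_{\xr}$ possibly singular on $|z|=1$ but with all eigenvalues of modulus $\le 1$ and eigenvalues strictly inside except at $z=1$; and $\rho(\wh R_{\xr})<1$ since the only unit eigenvalue of $\wh R$ has been removed and the rest are the $\xi_i$, $i\le n-1$... actually $\wh R_{\xr}$ inherits eigenvalues $\xi_1,\ldots,\xi_{n-1}$ and the new $0$, all of modulus $<1$ (using $|\xi_{n-1}|<\xi_n=1$, which holds since $1$ is the only root of modulus~$1$ under Assumption~\ref{a:one}). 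The eigenvalue claims follow from the determinant identities $\det(I-z\wh R_{\xr})=\frac{z-1}{z}\cdot\frac{1}{\text{(appropriate)}}\det(I-z\wh R)$ and $\det(I-z^{-1}\wh G_{\xr})$ being, up to the shift, that of $\wh G$, mirroring the computation in Theorem~\ref{thm:frs}. Finally, $\wh G_{\xr}$ and $\wh R_{\xr}$ solve~\eqref{eq:ghs} and~\eqref{eq:rhs} by direct inspection from the factorization (reading off $A_{-1}^{\xr}\wh G_{\xr}^2+(A_0^{\xr}-I)\wh G_{\xr}+A_1^{\xr}=0$ etc.), and they are of minimal spectral radius because their eigenvalues are precisely the $n$ roots of smallest modulus of $zB_{\xr}(z^{-1})$ and of $B_{\xr}(z)$'s reciprocals, respectively.

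The main obstacle I anticipate is step (i): verifying the rank-one-corrected product collapses to $\varphi_{\xr}(z^{-1})$ requires simultaneously using several of the scalar normalizations ($u_G^Tv=1$ replaced here by $v_{\wh G}^Tu_G=1$, and $v_{\wh G}^T\wh K^{-1}u_{\wh R}=-1$) together with the eigenvector relations $Gu_G=u_G$, $\wh R u_{\wh R}=u_{\wh R}$, $v_{\wh G}^T\wh G=v_{\wh G}^T$ (is $v_{\wh G}$ a \emph{left} Perron vector of $\wh G$? — yes, $\wh G$ is stochastic in the null recurrent case so $\wh G e=e$ and $v_{\wh G}$ its left Perron vector with eigenvalue~$1$), and the cross-identities $A_{-1}\wh G=\wh RA_1$, $K=A_0-I+A_1G$. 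Getting all the rank-one terms to cancel in the $z^0$ coefficient is where the normalization $v_{\wh G}^T\wh K^{-1}u_{\wh R}=-1$ is used in an essential way, and this is the bookkeeping that must be done carefully; the positivity/M-matrix facts from Theorem~\ref{prop:4} guarantee that such a normalization is possible (since $v_{\wh G}^T\wh K^{-1}u_{\wh R}<0$). Everything else is determinant counting and direct substitution.
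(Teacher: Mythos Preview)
Your plan is sound and would work, but the paper takes a more economical route that avoids your step~(i) entirely. Rather than matching the three Laurent coefficients of the product $(I-z\wh R_{\xr})\wh K_{\xr}(I-z^{-1}\wh G_{\xr})$ against $\varphi_{\xr}(z^{-1})$, the paper first \emph{defines} $\wh R_{\xr}$ by \eqref{eq:rht} and checks directly that it solves the single matrix equation \eqref{eq:rhs}; this is one computation using $\wh R_{\xr}^2=\wh R^2+u_{\wh R}v_{\wh G}^T\wh K^{-1}\wh R$ together with the identities in \eqref{eq:several}. Once $\wh R_{\xr}$ is known to solve \eqref{eq:rhs} with the right spectral radius, the weak canonical factorization of $\varphi_{\xr}(z^{-1})$ comes for free from Theorem~\ref{thm:w} (part~2, with the roles of $\wh G$ and $\wh R$ interchanged), and the formulas for $\wh K_{\xr}$ and $\wh G_{\xr}$ drop out mechanically: $\wh K_{\xr}=A_0^{\xr}-I+\wh R_{\xr}A_1^{\xr}$ gives \eqref{eq:kht}, and then $\wh G_{\xr}=-\wh K_{\xr}^{-1}A_1^{\xr}$ is computed via Sherman--Morrison to yield \eqref{eq:ght}. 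For the eigenvalue statements the paper simply observes $\wh R_{\xr}u_{\wh R}=0$ and $v_{\wh G}^T\wh G_{\xr}=v_{\wh G}^T$ and invokes Brauer's Theorem~\ref{thm:brauer}, which is cleaner than tracking determinant identities. Your approach has the advantage of being self-contained (no appeal to Theorem~\ref{thm:w}), but at the cost of a heavier three-way coefficient match; the paper's route trades that for one matrix-equation verification plus existing machinery.
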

\begin{proof}
Since both $G$ and $\wh G$ are stochastic, $u_G = u_{\wh G} = e$.  As
 $u_G$ and $v_{\wh G}$ are right and left eigenvector,
respectively, of $\wh G$ corresponding to the same eigenvalue, then
$v_{\wh G}^Tu_G\ne 0$ and we may scale the vectors in such a way that
$v_{\wh G}^Tu_G=1$. In view of part 2 of Theorem \ref{prop:4} we have $v_{\wh G}^T\wh K^{-1}u_{\wh R}<0$, so that we may normalize $u_{\wh R}$ so that
$v_{\wh G}^T\wh K^{-1}u_{\wh R}=-1$.
Observe that, for the matrix $\wh R_{\xr}$ of \eqref{eq:rht}, we have  
$\wh R_{\xr}u_{\wh R}=u_{\wh R}+u_{\wh R}( v_{\wh G}^T\wh K^{-1} u_{\wh R})=0$.
From this property, in view of Theorem \ref{thm:brauer}, it follows that the eigenvalues of $\wh R_{\xr}$ are those of 
${\wh R}$, except for the eigenvalue 1, which is replaced by 0. Similarly, for the matrix
 $\wh G_{\xr}$ of \eqref{eq:ght}, one finds that 
 $v_{\wh G}^T \wh G_{\xr}=v_{\wh G}^T\wh  G= v_{\wh G}^T$, therefore
 the matrix $\wh G_{\xr}$ has the same eigenvalues of $\wh G$ for
 Theorem \ref{thm:brauer}.
 Now we prove that $\wh R_{\xr}$ solves equation \eqref{eq:rhs}. 
 By replacing $X$ with $\wh R_{\xr}$ and the block coefficients with the expressions in \eqref{eq:coeds0}, the left hand side of
equation \eqref{eq:rhs}  becomes
$A_{-1}(I-Q)+\wh R_{\xr}(A_0-I+A_1Q)+\wh R_{\xr}^2A_1.$
Observe that 
$\wh R_{\xr}^2={\wh R}^2+ u_{\wh R}v_{\wh G}^T \wh K^{-1}\wh R$.
By replacing $\wh R_{\xr}$ and $\wh R_{\xr}^2$ with their expressions in terms of $\wh R$, and by using the property
$ A_{-1}+\wh RA_0+\wh R^2A_1=\wh R$, 
the left hand side of equation \eqref{eq:rhs}  becomes
\[
\begin{split}
A_{-1} (I-Q)&+\wh R_{\xr}(A_0-I+A_1Q)+\wh R_{\xr}^2A_1  \\ 
&= -A_{-1}Q+ \wh R A_1Q +  u_{\wh R}v_{\wh G}^T  \wh K^{-1} (A_0-I+A_1Q) 
+u_{\wh R}v_{\wh G}^T\wh K^{-1}\wh RA_1
 \\
&= -A_{-1}Q +A_{-1}\wh GQ - u_{\wh R}v_{\wh G}^T\wh GQ+u_{\wh R}v_{\wh G}^T  \wh K^{-1}
(A_0-I+\wh R A_1) \\
&=-u_{\wh R}v_{\wh G}^TQ+u_{\wh R}v_{\wh G}^T=0,
\end{split}
\]
where the first equality holds since $\wh RA_1=A_{-1}\wh G$ and $\wh K^{-1}A_1=-\wh G$, the second and third equalities hold since $\wh GQ=Q$, $\wh K=A_0-I+\wh RA_1$ and $v_{\wh G}^TQ=v_{\wh G}^T$.
In view of Theorem \ref{thm:w}, where the role of $G$ is replaced by $R$, the function $\varphi_{\xr}(z^{-1})$ 
has the desired weak canonical factorization
where
$\wh K_{\xr}=A^\xr_0-I+\wh R_{\xr} A^\xr_1$ and $\wh G_{\xr}=-\wh
  K_{\xr}^{-1}A^\xr_1$. 
To prove that $\wh K_{\xr}$ is given by \eqref{eq:kht}, we replace the
expression \eqref{eq:rht} of $\wh R_{\xr}$ in $\wh K_{\xr}$ and this 
yields
\[
\begin{split}
\wh K_{\xr}&=A_0-I+A_1Q+(\wh R+u_{\wh R}v_{\wh G}^T\wh K^{-1})A_1\\
&= A_0-I+\wh RA_1+u_{\wh R}v_{\wh G}^T\wh K^{-1}A_1+A_1Q\\
&=\wh K+(-u_{\wh R}+A_1u_G)v_{\wh G}^T
=\wh K-(u_{\wh R}+\wh K u_G)v_{\wh G}^T.
\end{split}
\]
Here we have used the properties $\wh K^{-1}A_1=-\wh G$, $v_{\wh G}^T\wh G= v_{\wh G}^T$, $\wh Gu_{G}=u_G$, and $\wh K=A_0-I+\wh RA_1$.
Finally,  we prove that $\wh G_{\xr}$ is given by \eqref{eq:ght}.
By using the Sherman-Woodbury-Morrison formula we may write
 $\wh K_{\xr}^{-1}=\wh K^{-1}+\gamma \wh K^{-1}(u_{\wh R}+\wh K u_G)v_{\wh G}^T\wh K^{-1}$, where $\gamma=1/(1-v_{\wh G}^T\wh K ^{-1}(u_{\wh R}+\wh K u_G))=1$ for the assumptions on $v_{\wh G}$, $u_{\wh R}$ and $u_G$. Hence,  
  $\wh K_{\xr}^{-1}=\wh K^{-1}+(\wh K^{-1} u_{\wh R}+ u_G)v_{\wh G}^T\wh K^{-1}$ 
  so that
 $\wh G_{\xr}=-\wh K_{\xr}^{-1}A^\xr_1= \wh G+(u_G+\wh K^{-1}u_{\wh R})v_{\wh G}^T$.
\end{proof}

In the non null recurrent case, the function  $\varphi_{\xr}(z^{-1})$  has a (weak) canonical factorization, as stated by the following theorem.

\begin{theorem}\label{thm:crspr}
Assume that $\xi_n<\xi_{n+1}$ (i.e., the QBD is not null recurrent).
Define  $Q=u_Gv^T$,  with $v$ any vector such that $u_G^Tv=1$ and $\xi_nv^T \wh G u_G\ne 1$. The Laurent matrix polynomial 
$\varphi_{\xr}(z^{-1})$ defined in \eqref{shift},
has the  factorization
\[
\varphi_{\xr}(z^{-1}) =(I-z \wh R_{\xr})\wh K_{\xr}(I-z^{-1}\wh G_{\xr}),
\]
  where 
\[\begin{split}
W_{\xr}=&W-\xi_n QWR, \\
\wh K_{\xr}=& A^\xr_0-I+A^\xr_{-1}\wh G_{\xr}= A^\xr_0-I+\wh R_{\xr} A_{1},\\
G_{\xr}=&G-\xi_nQ,\\
\wh G_{\xr}=&W_{\xr} R W_{\xr}^{-1},\\
\wh R_{\xr}=&W_{\xr}^{-1}G_{\xr} W_{\xr}.
\end{split}
\] 
Moreover, $\wh G_{\xr}$ and $\wh R_{\xr}$ are the solutions with minimal
spectral radius of \eqref{eq:ghs} and \eqref{eq:rhs}, respectively. 
  The factorization
is canonical if
$\xi_n=1$ and weakly canonical if $\xi_{n+1}=1$.
\end{theorem}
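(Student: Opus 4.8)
The plan is to build on Theorem~\ref{thm:frs} and the non-null-recurrent part of Theorem~\ref{thm:w} (Part 3), applied with the roles of $G$ and $R$ interchanged. First I would recall from Theorem~\ref{thm:frs} that $\varphi_{\xr}(z)=(I-zR_\xr)K_\xr(I-z^{-1}G_\xr)$ with $R_\xr=R$, $K_\xr=K$, $G_\xr=G-\xi_nQ$, and that the roots of $B_\xr(z)$ are those of $B(z)$ with $\xi_n$ replaced by $0$. Since the QBD is not null recurrent, $\xi_n<\xi_{n+1}$, and after the shift the $n$ smallest-modulus roots of $B_\xr(z)$ are $0,\xi_1,\dots,\xi_{n-1}$ while the remaining roots are $\xi_{n+1},\dots,\xi_{2n}$; thus $|\xi_{n-1}^\xr|<|\xi_{n+1}^\xr|$ in the notation of Theorem~\ref{thm:w} (here I mean the $n$-th and $(n+1)$-th roots of $B_\xr$), so the hypothesis $|\xi_n^\xr|\le 1\le|\xi_{n+1}^\xr|$ of Theorem~\ref{thm:w} is met with strict inequality on at least one side (strict on the left if $\xi_n=1$, i.e. positive recurrent, since then the shifted root set has $|\xi_{n-1}|<1$ but wait — one must be careful here: in the positive recurrent case $\xi_n=1$ is shifted to $0$ so $B_\xr$ has $n$ roots of modulus $<1$ and $n$ of modulus $>1$, giving a genuine canonical factorization; in the transient case $\xi_{n+1}=1$ stays, so only a weak canonical factorization is available). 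This dichotomy is exactly the last sentence of the statement.

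Next I would verify the hypotheses of Part~3 of Theorem~\ref{thm:w} for $\varphi_\xr$: we need a solution $G_\xr$ of \eqref{eq:gs} with $\rho(G_\xr)=|\xi_{n-1}^\xr|$ (equivalently the $n$-th smallest root of $B_\xr$) and a solution $\wh G_\xr$ of \eqref{eq:ghs} with $\rho(\wh G_\xr)$ equal to the reciprocal of the $(n+1)$-th root. The matrix $G_\xr=G-\xi_nQ$ provides the first, by Theorem~\ref{thm:frs}. For the existence of $\wh G_\xr$ I would not invoke it as a hypothesis but rather construct it: apply Part~3 of Theorem~\ref{thm:w} to $\varphi_\xr$ in the form where the convergent series is $W_\xr=\sum_{i=0}^\infty G_\xr^i K_\xr^{-1} R_\xr^i=\sum_{i=0}^\infty G_\xr^i K^{-1} R^i$, which converges because $\rho(G_\xr)<1$ or $\rho(R)<1$ (one of the two holds since the QBD is not null recurrent and the shift only lowers $\rho(G)$). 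Then Theorem~\ref{thm:w} Part~3 gives that $W_\xr$ is nonsingular, solves the Stein equation $X-G_\xr X R=K^{-1}$, and yields $\wh G_\xr=W_\xr R W_\xr^{-1}$, $\wh R_\xr=W_\xr^{-1}G_\xr W_\xr$ as the minimal-spectral-radius solutions of \eqref{eq:ghs} and \eqref{eq:rhs}, together with the factorization $\varphi_\xr(z^{-1})=(I-z\wh R_\xr)\wh K_\xr(I-z^{-1}\wh G_\xr)$ with $\wh K_\xr=W_\xr^{-1}=K_\xr(I-G_\xr\wh G_\xr)$, which one rewrites as $A_0^\xr-I+A_{-1}^\xr\wh G_\xr=A_0^\xr-I+\wh R_\xr A_1$ using the identities \eqref{eq:several} adapted to the shifted coefficients.

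The remaining task is to identify $W_\xr$ with the claimed closed form $W-\xi_n QWR$, i.e. to express the new Stein solution in terms of the old one. The clean way is to use the Stein equations: $W$ solves $W-GWR=K^{-1}$ and $W_\xr$ solves $W_\xr-G_\xr W_\xr R=K^{-1}$; subtracting and using $G_\xr=G-\xi_nQ$ gives $(W-W_\xr)-G_\xr(W-W_\xr)R=\xi_nQWR$, so by uniqueness of the Stein solution (the operator $X\mapsto X-G_\xr X R$ is invertible since $\rho(G_\xr)\rho(R)<1$) it suffices to check that $W-(W-\xi_n QWR)=\xi_n QWR$ indeed solves $X-G_\xr X R=\xi_n QWR$; equivalently, using $GQ=\xi_nQ$ hence $G_\xr Q=(G-\xi_nQ)Q=\xi_n Q-\xi_n(v^Tu_G)\,\xi_nQ/\ldots$ — this is where a short computation with $Q=u_Gv^T$, $Qu_G=u_G$, $GQ=\xi_nQ$ is needed, and one also uses $\xi_n v^T\wh G u_G\ne 1$ to guarantee $W_\xr$ (equivalently $\wh K_\xr$, equivalently $I-G_\xr\wh G_\xr$) is nonsingular and that the normalization is consistent. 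I expect this last identification — showing $\xi_n QWR$ solves the right Stein equation and tying the nonsingularity of $W_\xr$ to the genericity condition $\xi_n v^T\wh G u_G\ne 1$ — to be the main obstacle; it is a finite-rank perturbation bookkeeping exercise, and the cleanest route is probably to observe $Q W_\xr = \xi_n^{-1}$-scaled rank-one corrections of $QW$ and to compute $v^T W_\xr u_{\ast}$ type scalars explicitly, or alternatively to apply the Sherman–Morrison formula to $W_\xr=(I-\xi_n Q(\cdot)R)W$ viewed as a rank-one modification. Finally, the canonical-versus-weak dichotomy follows by reading off $\rho(\wh R_\xr)=\rho(G_\xr)=\max(|\xi_{n-1}|,0)$ and $\rho(\wh G_\xr)=\rho(R)=\xi_{n+1}^{-1}$: both are $<1$ iff $\xi_n<1$ and $\xi_{n+1}>1$, so the factorization is genuinely canonical when $\xi_n=1$ (then $G_\xr$ has spectral radius $<1$ and $\xi_{n+1}>1$), and only weakly canonical when $\xi_{n+1}=1$.
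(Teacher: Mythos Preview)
Your overall architecture is close to the paper's, but there is a genuine circularity in the core step. You try to \emph{construct} $\wh G_\xr$ by invoking Part~3 of Theorem~\ref{thm:w}, yet that part explicitly assumes the existence of a solution $\wh G$ of \eqref{eq:gh} with the correct spectral radius as a hypothesis; it does not produce $\wh G$ from $W$. So the sentence ``Theorem~\ref{thm:w} Part~3 gives that $W_\xr$ is nonsingular \ldots\ and yields $\wh G_\xr=W_\xr R W_\xr^{-1}$'' is not justified: you would need $\wh G_\xr$ in hand to conclude that $W_\xr$ is nonsingular, which is exactly what you are trying to establish. The paper avoids this by reversing the order: first it proves the closed form $W_\xr=W-\xi_nQWR$ directly (using $G_\xr^i=G^i-\xi_nQG^{i-1}$, which follows from $G_\xr Q=0$; your Stein-equation route would also work once you observe $G_\xr Q=(G-\xi_nQ)Q=\xi_nQ-\xi_nQ=0$), then deduces nonsingularity of $W_\xr$ from $W_\xr=(I-\xi_nQWRW^{-1})W$ together with $\wh G=WRW^{-1}$ for the \emph{original} problem (Theorem~\ref{thm:w} Part~3 applied to $\varphi$, where the minimal nonnegative $\wh G$ is known to exist), yielding the condition $\xi_n v^T\wh G u_G\neq 1$. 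Only then does it invoke Theorem~\ref{thm:h0} Part~3 (not Theorem~\ref{thm:w}), whose sole extra hypothesis is that $H_0$ be nonsingular, to obtain the canonical factorization of $\varphi_\xr(z^{-1})$ and the formulas for $\wh G_\xr,\wh R_\xr$.

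There is a second gap in the transient case $\xi_{n+1}=1$. Theorem~\ref{thm:h0} requires the strict splitting $|\xi_n^\xr|<1<|\xi_{n+1}^\xr|$, but after the right shift one only has $|\xi_{n-1}|<1=|\xi_{n+1}|$, so the theorem does not apply directly. The paper handles this by the scaling $\varphi^{(t)}(z)=\varphi(tz)$ with $\xi_n<t<1$, which restores strict splitting, applies the positive-recurrent argument to $\varphi^{(t)}$, and then scales back to obtain a weak canonical factorization for $\varphi_\xr(z^{-1})$. Your proposal mentions the canonical/weak dichotomy at the end but provides no mechanism to obtain the factorization when $\xi_{n+1}=1$.
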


\begin{proof}
As a first step, we show that the matrix $W_{\xr}=\sum_{i=0}^{+\infty}G_{\xr}^iK^{-1}R^i$, with $G_{\xr}=G-\xi_n Q$, is nonsingular, so that we can apply property 3~of Theorem \ref{thm:h0} to the matrix Laurent polynomial   $\varphi_{\xr}(z)$ of Theorem \ref{thm:rs}.
Observe that $G_{\xr}^i=G^i-\xi_n QG^{i-1}$, for $i\ge 1$. Therefore, we may write
\[
\begin{split}
W_{\xr}&=K^{-1}+\sum_{i=1}^{+\infty}(G^i-\xi_n QG^{i-1})K^{-1}R^i\\
&=K^{-1}+\sum_{i=1}^{+\infty}G^iK^{-1}R^i-\xi_n Q\left(\sum_{i=0}^{+\infty}G^iK^{-1}R^i \right) R
=W-\xi_n QWR.
\end{split}
\]
Since $\det W\ne 0$ by Theorem \ref{thm:w}, part 3, then $\det 
W_{\xr}=\det(I-\xi_n QWRW^{-1})\det W$. Moreover, since $Q=u_G v^T$, then
the matrix $I-\xi_n QWRW^{-1}$ is nonsingular if and only if $\xi_n
v^T WRW^{-1} u_G\ne 1$.   Since $\wh G=WRW^{-1}$, the latter condition
holds if $\xi_nv^T \wh G u_G\ne 1$, which we assume, and so,
 the matrix $W_{\xr}$ is nonsingular.
If $\xi_n=1$, since $\rho(G_{\xr})<1$ and $\rho(R)<1$, from 3 of Theorem \ref{thm:h0} applied to  the matrix Laurent polynomial   $\varphi_{\xr}(z)$, we deduce that $\varphi_{\xr}(z^{-1})$ has the  canonical factorization 
$\varphi_{\xr}(z^{-1}) =(I-z\wh R_{\xr})\wh K_{\xr}(I-z^{-1}\wh G_{\xr})$ with
$\wh K_{\xr}=A_0^\xr-I+A^\xr_{-1}\wh G_{\xr}=A^\xr_0-I+\wh R_{\xr} A_{1}$ and $\wh G_{\xr}=W_{\xr}
 R  W_{\xr}^{-1}$,  $\wh R_{\xr}= W_{\xr}^{-1}  G_{\xr} W_{\xr}$. If $\xi_{n+1}=1$, we can apply the above property to the function $\varphi^{(t)}(z)=\varphi(tz)$ with $\xi_{n}<t<1$ and obtain the canonical factorization for $\varphi^{(t)}(z)$.
Scaling again the variable $z$ by $t^{-1}$ we obtain a weak canonical factorization for $\varphi(z)$. 
With the same arguments 
used in the proof of Theorem \ref{thm:frs}, we may prove that
$\wh G_{\xr}$ and $\wh R_{\xr}$ are the solutions with minimal spectral radius of  \eqref{eq:ghs} and \eqref{eq:rhs}.
\end{proof}

In the above theorem we can choose $v=v_{\wh G}$, so that $v_{\wh G}^T\wh G=\xi_{n+1}^{-1} v_{\wh G}^T$. Since $u_G>0$,  then  $v_{\wh G}^Tu_G>0$ and we can normalize the vectors so that $v_{\wh G}^Tu_G=1$. In this way we obtain 
$
\xi_nv_{\wh G}^T \wh Gu_G=\xi_n \xi_{n+1}^{-1} v_{\wh G}^Tu_G=\xi_n\xi_{n+1}^{-1}<1.
$
 Therefore, the assumption on $v$ of Theorem \ref{thm:crspr} is satisfied.

\subsection{Shift to the left}
As for the right shift,  the matrix Laurent polynomial $
\varphi_{\xl}(z)$  defined
by \eqref{shiftl} and obtained by shifting $\xi_{n+1}$ to infinity, has a canonical factorization, as shown by the following theorem.

\begin{theorem}\label{thm:fls}
Define  $S=wv_R^T$, where $w$ is any vector such that $v_R^Tw=1$.
The  function $\varphi_{\xl}(z)$ defined in \eqref{shiftl}, has the  factorization
\[
\varphi_{\xl}(z) =(I-zR_{\xl})K_\xl(I-z^{-1} G_\xl),
\]
where $R_{\xl}=R-\xi_{n+1}^{-1} S$, $G_\xl=G$ and $K_\xl=K$. 
This factorization is canonical in the transient case, weakly canonical otherwise.
Moreover, the eigenvalues
of $R_{\xl}$ are those of $R$, except for the eigenvalue $\xi_{n+1}^{-1}$ which
is replaced by zero; the matrices $G$ and $R_{\xl}$ are the solutions with minimal spectral radius of equations \eqref{eq:gs} and \eqref{eq:rs}, respectively.
\end{theorem}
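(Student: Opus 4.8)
The plan is to mirror the proof of Theorem \ref{thm:frs}, exploiting the duality between left and right shift established at the end of Section \ref{sec:sr}. First I would record the algebraic identity that makes the factorization collapse: since $v_R^T$ is a left Perron eigenvector of $R$ with $v_R^T R = \xi_{n+1}^{-1} v_R^T$, and $S = w v_R^T$, one has $v_R^T S = v_R^T$ (because $v_R^T w = 1$), hence $S R = \xi_{n+1}^{-1} S$ and therefore
\[
(I - z^{-1}G)\text{ is untouched while }\Bigl(I - \tfrac{z}{z-\xi_{n+1}}S\Bigr)(I - zR) = I - z\bigl(R - \xi_{n+1}^{-1}S\bigr).
\]
More precisely, I would verify $\bigl(I - \tfrac{z}{z-\xi_{n+1}}S\bigr)(I-zR) = I - zR_{\xl}$ with $R_{\xl} = R - \xi_{n+1}^{-1}S$ by a direct computation using $SR = \xi_{n+1}^{-1}S$ and $\det\bigl(I - \tfrac{z}{z-\xi_{n+1}}S\bigr) = -\tfrac{1}{z-\xi_{n+1}}$ as in Theorem \ref{thm:ls}. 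Combining this with the weak canonical factorization \eqref{lf}, $\varphi(z) = (I-zR)K(I-z^{-1}G)$, and with the definition \eqref{shiftl}, I immediately get $\varphi_{\xl}(z) = (I-zR_{\xl})K(I-z^{-1}G)$, which establishes the claimed factorization with $G_\xl = G$, $K_\xl = K$, $R_{\xl} = R - \xi_{n+1}^{-1}S$.

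Next I would identify the eigenvalues of $R_{\xl}$. From the relation $\det\bigl(I - zR_{\xl}\bigr) = -\tfrac{1}{z-\xi_{n+1}}\det\bigl(I-zR\bigr)$ read off the above, or equivalently by a direct appeal to Theorem \ref{thm:brauer} with $R_{\xl} = R - \xi_{n+1}^{-1} w v_R^T$ and noting $v_R^T R_{\xl} = v_R^T R - \xi_{n+1}^{-1} v_R^T = 0$ so that the eigenvalue $\xi_{n+1}^{-1}$ of $R$ is sent to $\xi_{n+1}^{-1} - \xi_{n+1}^{-1} v_R^T w = 0$ while all other eigenvalues are preserved. Since by Theorem \ref{thm:rs} (in its left-shift form, Theorem \ref{thm:ls}) the roots of $B_\xl(z) = z\varphi_\xl(z)$ are $\xi_1,\ldots,\xi_n,\xi_{n+2},\ldots,\xi_{2n},\infty$, and since in the transient case $\xi_n < 1 = \xi_{n+1} < |\xi_{n+2}|$, the $n$ roots of smallest modulus of $B_\xl(z)$ are $\xi_1,\ldots,\xi_n$ (all $<1$) and the $n$ roots of largest modulus are $\xi_{n+2},\ldots,\xi_{2n},\infty$ (all $>1$), so $\rho(G)<1$ and $\rho(R_\xl)<1$: the factorization is canonical. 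In the positive recurrent or null recurrent case one has $\xi_n = 1$, hence $\rho(G) = 1$, so the factorization is only weakly canonical; one should also check $\rho(R_\xl)$, which equals $1/|\xi_{n+2}|$ in the transient case and $1/\xi_{n+1} = 1$ in the recurrent cases when $\xi_{n+1}=1$ coincides with a preserved root — but here $\xi_{n+1}$ is precisely the removed root, so in the positive recurrent case $\rho(R_\xl) = 1/|\xi_{n+2}| < 1$ while $\rho(G)=1$, still only weak, consistent with the statement.

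Finally I would argue minimality: $G$ solves \eqref{eq:gs} because $A^\xl_{-1} = A_{-1}$, $A^\xl_0 = A_0 + \xi_{n+1}^{-1} S A_{-1}$, $A^\xl_1 = (I-S)A_1$, and $SA_1 G = \xi_{n+1}^{-1} S A_{-1}$ (using $A_1 G = R A_{-1}$ from \eqref{lf}, $v_R^T R = \xi_{n+1}^{-1} v_R^T$ and $S = w v_R^T$), so that the extra $S$-terms in $A^\xl_0 - I$ and $A^\xl_1$ cancel when evaluated on $G$; a direct inspection confirms $A^\xl_{-1} + (A^\xl_0 - I)G + A^\xl_1 G^2 = A_{-1} + (A_0-I)G + A_1 G^2 = 0$. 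Likewise $R_\xl$ solves \eqref{eq:rs} by direct substitution, which is forced anyway by the uniqueness of the canonical factorization together with Theorem \ref{thm:w}. Since the eigenvalues of $G$ are $\xi_1,\ldots,\xi_n$ and those of $R_\xl$ are $\xi_{n+2}^{-1},\ldots,\xi_{2n}^{-1},0$, i.e. exactly the reciprocals of the $n$ largest-modulus roots of $B_\xl(z)$, these matrices have minimal spectral radius among all solutions of \eqref{eq:gs} and \eqref{eq:rs} by the same argument as in Theorem \ref{thm:frs}. The only mildly delicate point is the bookkeeping of which root of $B_\xl$ is "removed" versus "preserved" in the recurrent versus transient cases, to correctly conclude canonical versus weakly canonical; everything else is a transcription of the right-shift proof through the transpose-and-invert duality $\varphi_\xl(z) = \wt\varphi_\xr^T(z^{-1})$, and I would point to that duality to shorten the verification rather than redo each computation.
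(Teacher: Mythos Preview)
Your proposal is correct and follows essentially the same approach as the paper: the paper's own proof is a single sentence pointing back to Theorem~\ref{thm:frs} via the identity $(I-\frac{z}{z-\xi_{n+1}}S)(I-zR)=I-z(R-\xi_{n+1}^{-1}S)$, and you have simply unfolded that sketch in detail. Your verification of the identity via $SR=\xi_{n+1}^{-1}S$, the eigenvalue bookkeeping for $R_{\xl}$, the canonical-versus-weak discussion, and the minimality argument are all in line with what the paper intends by ``carried out as the proof of Theorem~\ref{thm:frs}''.
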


\begin{proof}
The proof can be carried out as the proof of Theorem \ref{thm:frs}, after observing that $(I-\frac {z}{z-\xi_{n+1}}S)(I-zR)=I-z(R-\xi_{n+1}^{-1} S)$.
\end{proof}

Similarly to the shift to the right, we may prove the following results concerning the canonical factorization of $\varphi_{\xl}(z^{-1})$:

\begin{theorem}\label{thm:wcfnnrl}
Assume that $\xi_n=\xi_{n+1}=1$ (i.e., the QBD is null recurrent).
Define $S=u_{\wh R} v_{R}^T$,  where $v_{R}^Tu_{\wh
  R}=1$. Normalize $v_{\wh G}$ such that $v_{\wh G}^T\wh K^{-1}u_{\wh
  R}=-1$.   The function
$\varphi_{\xl}(z)$ defined in \eqref{shiftl} has the weak canonical factorization
\[
\varphi_{\xl}(z^{-1}) =(I-z\wh R_{\xl})\wh K_{\xl}(I-z^{-1}\wh G_{\xl})
\]
with 
\[\begin{split}
\wh R_{\xl}&=\wh R+ u_{\wh R} (v_{R}^T +v_{\wh G}^T\wh K^{-1}), \\
\wh K_{\xl}&= \wh K-u_{\wh R}(v_{\wh G}^T+v_R^T\wh K),\\
\wh G_{\xl}&=\wh G+\wh K^{-1}u_{\wh R}v_{\wh G}^T.
\end{split}
\]
The eigenvalues of $\wh G_{\xl}$ are those of $\wh G$, except for the eigenvalue 1 which is replaced by 0; the eigenvalues of $\wh R_{\xl}$ are the same as the eigenvalues of $\wh R$. Moreover, the matrices $\wh G_{\xl}$ and $\wh R_{\xl}$ are the solutions of minimum spectral radius of  equations \eqref{eq:ghs} and \eqref{eq:rhs}, 
 respectively.  
\end{theorem}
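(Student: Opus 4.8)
The plan is to argue in close parallel with the proof of Theorem~\ref{thm:wcfnr}, with the left shift of Section~\ref{sec:sr} in place of the right shift, and at the end to invoke Theorem~\ref{thm:w} (as in the proof of Theorem~\ref{thm:wcfnr}) to pass from a factorization of $\varphi_\xl(z)$ to one of $\varphi_\xl(z^{-1})$. Equivalently, one can deduce everything by duality from Theorem~\ref{thm:wcfnr} applied to $\wt\varphi(z)=\varphi^T(z^{-1})$, via the identity $\varphi_\xl(z)=\wt\varphi_\xr^T(z^{-1})$ of Section~\ref{sec:sr}.

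First I would record the null recurrent facts that make the normalizations legitimate. Since $\xi_n=\xi_{n+1}=1$, Theorem~\ref{thm:spprop} gives $\rho_G=\rho_R=\rho_{\wh G}=\rho_{\wh R}=1$ as simple dominant eigenvalues, $u_G=u_{\wh G}=e$, $v_R=v_{\wh R}$ and $v_R^T(A_{-1}+A_0+A_1)=v_R^T$; in particular $u_{\wh R}$ and $v_R$ are a right and a left Perron vector of $\wh R$, so $v_R^Tu_{\wh R}\ne 0$ and $u_{\wh R}$ may be scaled so that $v_R^Tu_{\wh R}=1$, while Theorem~\ref{prop:4}(2) lets us scale $v_{\wh G}$ so that $v_{\wh G}^T\wh K^{-1}u_{\wh R}=-1$. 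With $S=u_{\wh R}v_R^T$, Theorems~\ref{thm:ls} and~\ref{thm:fls} then give $A_{-1}^\xl=A_{-1}$, $A_0^\xl-I=A_0-I+SA_{-1}$, $A_1^\xl=(I-S)A_1$ (recall $\xi_{n+1}=1$) and the weak canonical factorization $\varphi_\xl(z)=(I-zR_\xl)K(I-z^{-1}G)$ with $R_\xl=R-S$, whose spectrum is that of $R$ with the eigenvalue $1$ replaced by $0$.

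The core step is to verify that $\wh R_\xl=\wh R+u_{\wh R}(v_R^T+v_{\wh G}^T\wh K^{-1})$ solves equation~\eqref{eq:rhs} for the shifted coefficients. Substituting $X=\wh R_\xl$ and the $A_i^\xl$ into $A_{-1}^\xl+X(A_0^\xl-I)+X^2A_1^\xl$, I would expand and see the rank-one cross terms telescope to $0$, using $\wh RA_1=A_{-1}\wh G$, $\wh K=A_0-I+\wh RA_1=A_0-I+A_{-1}\wh G$, $\wh K^{-1}A_1=-\wh G$ (cf.~\eqref{eq:several},~\eqref{rf}), together with $\wh Gu_G=u_G$, $v_{\wh G}^T\wh G=v_{\wh G}^T$, $\wh Ru_{\wh R}=u_{\wh R}$, $v_R^T\wh R=v_R^T$, the identity $v_R^T\wh K=-v_R^TA_{-1}$ (which follows from $v_R^T\wh R=v_R^T$ and $v_R^T(A_{-1}+A_0+A_1)=v_R^T$), and the two normalizations $v_R^Tu_{\wh R}=1$, $v_{\wh G}^T\wh K^{-1}u_{\wh R}=-1$. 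This is the same, somewhat lengthy, bookkeeping as in the displayed computation in the proof of Theorem~\ref{thm:wcfnr}, and it is the step I expect to be the main obstacle. By Brauer's Theorem~\ref{thm:brauer}, since $u_{\wh R}$ is a simple eigenvector of $\wh R$ for the eigenvalue $1$ and $(v_R^T+v_{\wh G}^T\wh K^{-1})u_{\wh R}=0$, the eigenvalues of $\wh R_\xl$ coincide with those of $\wh R$, so $\rho(\wh R_\xl)=1=|\xi_{n,\xl}|$; Theorem~\ref{thm:w}, applied as in the proof of Theorem~\ref{thm:wcfnr}, then yields the weak canonical factorization $\varphi_\xl(z^{-1})=(I-z\wh R_\xl)\wh K_\xl(I-z^{-1}\wh G_\xl)$ with $\wh K_\xl=A_0^\xl-I+\wh R_\xl A_1^\xl$ and $\wh G_\xl=-\wh K_\xl^{-1}A_1^\xl$.

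It then remains to bring $\wh K_\xl$ and $\wh G_\xl$ to the stated closed forms. Substituting $\wh R_\xl$ and simplifying as above (again via $v_R^T\wh K=-v_R^TA_{-1}$) gives $\wh K_\xl=\wh K-u_{\wh R}(v_{\wh G}^T+v_R^T\wh K)$; since $(v_{\wh G}^T+v_R^T\wh K)\wh K^{-1}u_{\wh R}=v_{\wh G}^T\wh K^{-1}u_{\wh R}+v_R^Tu_{\wh R}=0$, the Sherman--Morrison--Woodbury formula applies with denominator $1$ and yields $\wh K_\xl^{-1}=\wh K^{-1}+\wh K^{-1}u_{\wh R}(v_{\wh G}^T\wh K^{-1}+v_R^T)$, after which $\wh G_\xl=-\wh K_\xl^{-1}(I-S)A_1$ collapses, using $\wh K^{-1}A_1=-\wh G$ and $v_{\wh G}^T\wh G=v_{\wh G}^T$, to $\wh G_\xl=\wh G+\wh K^{-1}u_{\wh R}v_{\wh G}^T$. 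Finally, Brauer's theorem (with $v_{\wh G}^T$ a simple left eigenvector of $\wh G$ for the eigenvalue $1$ and $v_{\wh G}^T\wh K^{-1}u_{\wh R}=-1$) shows that the eigenvalues of $\wh G_\xl$ are those of $\wh G$ with the eigenvalue $1$ replaced by $0$, and the minimality of the spectral radii follows, as in Theorems~\ref{thm:frs} and~\ref{thm:wcfnr}, from the fact that the eigenvalues of $\wh G_\xl$ and of $\wh R_\xl$ are respectively the $n$ roots of smallest modulus of $zB_\xl(z^{-1})$ and of $B_\xl(z)$. For the duality route one would instead apply Theorem~\ref{thm:wcfnr} to $\wt\varphi(z)=\varphi^T(z^{-1})$, whose minimal solutions are $\wt G=R^T$, $\wt R=G^T$, $\wt{\wh G}=\wh R^T$, $\wt{\wh R}=\wh G^T$ with $\wt K=K^T$, $\wt{\wh K}=\wh K^T$ and Perron vectors $u_{\wt G}=v_R$, $v_{\wt{\wh G}}=u_{\wh R}$, $u_{\wt{\wh R}}=v_{\wh G}$, and then transpose back through $\varphi_\xl(z)=\wt\varphi_\xr^T(z^{-1})$; the only subtlety is that $\wt\varphi$ need not be a QBD, so one must observe that the proof of Theorem~\ref{thm:wcfnr} uses only $\wt\xi_n=\wt\xi_{n+1}=1$, the coincidence $u_{\wt G}=u_{\wt{\wh G}}$, and $v_{\wt{\wh G}}^T\wt{\wh K}^{-1}u_{\wt{\wh R}}<0$, all of which are guaranteed by the null recurrent properties of $\varphi$.
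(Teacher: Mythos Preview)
Your proposal is correct and is essentially what the paper intends: since the paper merely states that Theorem~\ref{thm:wcfnnrl} is proved ``similarly to the shift to the right,'' your direct route---verifying that $\wh R_{\xl}$ solves \eqref{eq:rhs}, reading off the spectra via Brauer's theorem, invoking Theorem~\ref{thm:w} as in the proof of Theorem~\ref{thm:wcfnr}, and then computing $\wh K_{\xl}$ and $\wh G_{\xl}$ explicitly via Sherman--Morrison---matches the implied argument. The duality alternative through $\wt\varphi(z)=\varphi^T(z^{-1})$ that you sketch is also valid and is precisely the observation made at the end of Section~\ref{sec:sr}; your caveat that Theorem~\ref{thm:wcfnr}'s proof uses only the spectral and eigenvector properties (not nonnegativity of the coefficients) is the right way to handle the fact that $\wt\varphi$ is not itself a QBD.
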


If $\xi_n<\xi_{n+1}$ we have the following result.

\begin{theorem}\label{thm:wcfnr2}
Assume that $\xi_n<\xi_{n+1}$ (i.e., the QBD is not null recurrent).
Define  $Q=w v_R^T$,  with $w$ any vector such that $v_R^Tw=1$ and $\xi_{n+1}^{-1}v_R^T \wh R w\ne 1$. The Laurent matrix polynomial 
$\varphi_{\xl}(z^{-1})$
defined in \eqref{shiftl}, 
has the  factorization
\[
\varphi_{\xl}(z^{-1}) =(I-z\wh R_{\xl})\wh K_{\xl}(I-z^{-1}\wh G_{\xl})
\]
with
\[
\begin{split}
\wh G_{\xl}&=W_{\xl}^{-1}R_{\xl} W_{\xl},\\
W_{\xl}&=W-\xi_{n+1}^{-1} GWS,\\
R_{\xl}&=R-\xi_{n+1}^{-1}S,\\
\wh R_{\xl}&= W_{\xl} G W_{\xl}^{-1},\\
\wh K_{\xl}&= A^\xl_0-I+A^\xl_{-1}\wh G_{\xl}=A^\xl_0-I+\wh R_{\xl} A_{1}.
\end{split}
\]
Moreover, $\wh G_{\xl}$ and $\wh R_{\xl}$ are the solutions with minimal spectral radius of equations
\eqref{eq:ghs} and  \eqref{eq:rhs},  respectively.
 The factorization
is canonical if $\xi_n<1$, is weakly canonical if $\xi_{n}=1$.
\end{theorem}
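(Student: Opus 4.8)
The plan is to prove Theorem~\ref{thm:wcfnr2} as the mirror image of Theorem~\ref{thm:crspr}, with the roles of $G$ and $R$ and of right- and left-multiplication interchanged; alternatively one could transport Theorem~\ref{thm:crspr} through the transpose identity $\varphi_{\xl}(z)=\wt\varphi^T_{\xr}(z^{-1})$ of Section~\ref{sec:sr}, but the direct route is cleaner. First I would record, from Theorem~\ref{thm:fls}, the factorization $\varphi_{\xl}(z)=(I-zR_{\xl})K(I-z^{-1}G)$ with $R_{\xl}=R-\xi_{n+1}^{-1}S$ and $S=wv_R^T$; since the left shift removes the root $\xi_{n+1}$ but leaves $\xi_n$, this factorization is canonical when $\xi_n<1=\xi_{n+1}$ (transient) and only weakly canonical when $\xi_n=1<\xi_{n+1}$ (positive recurrent), and the roots of $B_{\xl}(z)$ are $\xi_1,\dots,\xi_n,\xi_{n+2},\dots,\xi_{2n},\infty$.

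The heart of the proof is to compute the matrix $W_{\xl}$ and to show that it is nonsingular. I would work with the convergent series $W_{\xl}:=\sum_{i\ge 0}G^iK^{-1}R_{\xl}^i$ (the powers of $G$ are bounded and $\rho(R_{\xl})<1$, the eigenvalue $\xi_{n+1}^{-1}$ of $R$ being the one turned into $0$; in the transient case this is exactly the central Laurent coefficient of $\varphi_{\xl}(z)^{-1}$, by Theorem~\ref{thm:h0}(2)). Using $v_R^Tw=1$, hence $S^2=S$, and $v_R^TR=\xi_{n+1}^{-1}v_R^T$, hence $SR=\xi_{n+1}^{-1}S$, a one-line induction gives $R_{\xl}^i=R^i-\xi_{n+1}^{-1}R^{i-1}S$ for $i\ge 1$. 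Summing, and using that $W=\sum_{i\ge 0}G^iK^{-1}R^i$ converges because the QBD is not null recurrent (Theorem~\ref{prop:4}(3)),
\[
W_{\xl}=W-\xi_{n+1}^{-1}\Bigl(\sum_{i\ge 1}G^iK^{-1}R^{i-1}\Bigr)S=W-\xi_{n+1}^{-1}GWS .
\]
To see $W_{\xl}$ is nonsingular, I would factor $W_{\xl}=W\bigl(I-\xi_{n+1}^{-1}W^{-1}GWS\bigr)$, recall that $W$ is invertible and $\wh R=W^{-1}GW$ by Theorem~\ref{thm:w}(3), and apply the rank-one determinant identity to $I-\xi_{n+1}^{-1}(W^{-1}GWw)\,v_R^T$: its determinant is $1-\xi_{n+1}^{-1}v_R^T\wh Rw$, nonzero by the hypothesis on $w$.

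With $W_{\xl}$ nonsingular, I would invoke Theorem~\ref{thm:h0}(3) for $\varphi_{\xl}(z)$. In the transient case the roots satisfy $|\xi_{n,\xl}|=\xi_n<1<|\xi_{n+1,\xl}|$ and $G_{\xl}=G$ solves \eqref{eq:gs} with $\rho(G)=\xi_n$, so the theorem applies directly: $\varphi_{\xl}(z^{-1})$ has the stated canonical factorization, in which $\wh G_{\xl}$ and $\wh R_{\xl}$ are the conjugates of $R_{\xl}$ and $G$ by $W_{\xl}$, $\wh K_{\xl}=A_0^{\xl}-I+A_{-1}^{\xl}\wh G_{\xl}$, and $\wh G_{\xl}$, $\wh R_{\xl}$ solve \eqref{eq:ghs}, \eqref{eq:rhs}. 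In the positive recurrent case $\varphi_{\xl}(z)$ is only weakly canonical, so I would run the same argument on $\varphi^{(t)}(z)=\varphi(tz)$ for $1<t<\xi_{n+1}$, whose shifted polynomial has its roots strictly separated by the unit circle (and for which, one checks, the closed form of $W_{\xl}$ and the hypothesis on $w$ are unchanged), and then rescale $z$ by $t^{-1}$ to obtain a weak canonical factorization of $\varphi_{\xl}(z^{-1})$, as in the proof of Theorem~\ref{thm:w}. Finally, minimality of $\rho(\wh G_{\xl})$ and $\rho(\wh R_{\xl})$ follows as in Theorems~\ref{thm:frs} and~\ref{thm:crspr}: the eigenvalues of $\wh G_{\xl}$ are those of $R_{\xl}$, whose reciprocals are the $n$ roots of $B_{\xl}(z)$ of largest modulus, and the eigenvalues of $\wh R_{\xl}$ are those of $G$, i.e. the $n$ roots of $B_{\xl}(z)$ of smallest modulus.

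The step I expect to be the main obstacle is, exactly as in Theorem~\ref{thm:crspr}, the nonsingularity of $W_{\xl}$: bringing it into the rank-one-perturbed form $W-\xi_{n+1}^{-1}GWS$ and using the identity $\wh R=W^{-1}GW$ to collapse its determinant to the scalar $1-\xi_{n+1}^{-1}v_R^T\wh Rw$. The positive recurrent case, where the missing strict root gap forces the detour through $\varphi^{(t)}$, is the only other point requiring care.
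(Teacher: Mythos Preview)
Your proposal is correct and follows exactly the route the paper intends: Theorems~\ref{thm:wcfnnrl} and~\ref{thm:wcfnr2} are stated without proof, prefaced only by ``Similarly to the shift to the right, we may prove the following results'', so mirroring the proof of Theorem~\ref{thm:crspr} is precisely what is meant. Your computation $W_{\xl}=\sum_{i\ge 0}G^iK^{-1}R_{\xl}^i=W-\xi_{n+1}^{-1}GWS$ via $R_{\xl}^i=R^i-\xi_{n+1}^{-1}R^{i-1}S$, the rank-one nonsingularity argument collapsing to $1-\xi_{n+1}^{-1}v_R^T\wh R w\ne 0$ through $\wh R=W^{-1}GW$, and the appeal to Theorem~\ref{thm:h0}(3) (direct when $\xi_n<1$, via the scaling $\varphi(tz)$ when $\xi_n=1$) are the exact left-shift analogues of the steps in that proof; note only that applying Theorem~\ref{thm:h0}(3) with $H_0=W_{\xl}$ yields $\wh G_{\xl}=W_{\xl}R_{\xl}W_{\xl}^{-1}$ and $\wh R_{\xl}=W_{\xl}^{-1}GW_{\xl}$, so the placement of the inverses in the displayed formulas of the statement appears to be a typo.
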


\subsection{Double shift}
Consider the matrix function $\varphi_{\xd}(z)$ defined in \eqref{eq:ds}, obtained by shifting $\xi_n$ to 0 and $\xi_{n+1}$ to $\infty$.
The matrix Laurent polynomial $\varphi_{\xd}(z)$, has a canonical factorization, as shown by the following theorem.

\begin{theorem}\label{thm:fds}
Define  $Q=u_Gv^T$ and $S=wv_R^T$, with
$v$ and $w$ any vectors such that $u_G^Tv=1$ and $v_R^Tw=1$.
The  function $\varphi_{\xd}(z)$ defined in \eqref{eq:ds}, has the following canonical factorization
\[
\varphi_{\xd}(z) =(I-zR_{\xd})K_\xd(I-z^{-1} G_{\xd}),
\]
where $R_{\xd}=R-\xi_{n+1}^{-1} S$, $G_{\xd}=G-\xi_n Q$ and $K_\xd=K$.
Moreover,    $G_{\xd}$ and $R_{\xd}$ are the solutions with minimal spectral radius of  equations \eqref{eq:gs} and \eqref{eq:rs}, respectively.
 \end{theorem}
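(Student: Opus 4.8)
The plan is to obtain the factorization simply by combining the computations already carried out in the proofs of Theorems~\ref{thm:fls} and~\ref{thm:frs}. Insert the weak canonical factorization \eqref{lf}, $\varphi(z)=(I-zR)K(I-z^{-1}G)$, into the definition \eqref{eq:ds} of $\varphi_{\xd}(z)$ and let the left shift act on $I-zR$ and the right shift act on $I-z^{-1}G$. Since $Q=u_Gv^T$ and $Gu_G=\xi_nu_G$ we have $GQ=\xi_nQ$, hence $(I-z^{-1}G)(I+\tfrac{\xi_n}{z-\xi_n}Q)=I-z^{-1}(G-\xi_nQ)$, exactly as in Theorem~\ref{thm:frs}; since $S=wv_R^T$ and $v_R^TR=\xi_{n+1}^{-1}v_R^T$ we have $SR=\xi_{n+1}^{-1}S$, hence $(I-\tfrac z{z-\xi_{n+1}}S)(I-zR)=I-z(R-\xi_{n+1}^{-1}S)$, exactly as in Theorem~\ref{thm:fls}. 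The central factor $K$ is left untouched, so
\[
\varphi_{\xd}(z)=\Bigl(I-z(R-\xi_{n+1}^{-1}S)\Bigr)K\Bigl(I-z^{-1}(G-\xi_nQ)\Bigr)=(I-zR_{\xd})K_{\xd}(I-z^{-1}G_{\xd}),
\]
which is the claimed factorization with $K_{\xd}=K$.

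Next I would pin down the spectra of $G_{\xd}$ and $R_{\xd}$ and verify that the decomposition is genuinely canonical. Writing $G_{\xd}=G+u_G(-\xi_nv)^T$ with $u_G^Tv=1$, Brauer's Theorem~\ref{thm:brauer} shows that the eigenvalues of $G_{\xd}$ are those of $G$ with $\xi_n$ replaced by $\xi_n-\xi_n(u_G^Tv)=0$ (equivalently, $\det(I-z^{-1}G_{\xd})=\tfrac z{z-\xi_n}\det(I-z^{-1}G)$); likewise the eigenvalues of $R_{\xd}$ are those of $R$ with $\xi_{n+1}^{-1}$ replaced by $0$. By Theorem~\ref{thm:spprop} the remaining eigenvalues of $G_{\xd}$ are $\xi_1,\dots,\xi_{n-1}$ and those of $R_{\xd}$ are $\xi_{n+2}^{-1},\dots,\xi_{2n}^{-1}$. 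Because $1$ is the only root of $B(z)$ of modulus $1$ under Assumption~\ref{a:one}, in each of the three cases (positive recurrent, null recurrent, transient) one has $|\xi_{n-1}|<1<|\xi_{n+2}|$, so $\rho(G_{\xd})<1$ and $\rho(R_{\xd})<1$; together with the nonsingularity of $K_{\xd}=K$ (Theorem~\ref{prop:4}) this makes the above decomposition a canonical factorization. This spectral separation is exactly where the double shift improves on the single shifts, which only yield weak canonical factorizations when $\xi_n=1$ or $\xi_{n+1}=1$.

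Finally, matching the coefficients of $z^{-1},z^0,z$ in $(I-zR_{\xd})K(I-z^{-1}G_{\xd})$ against $z^{-1}A_{-1}^{\xd}+A_0^{\xd}-I+zA_1^{\xd}$ gives $A_{-1}^{\xd}=-KG_{\xd}$, $A_1^{\xd}=-R_{\xd}K$, $A_0^{\xd}-I=K+R_{\xd}KG_{\xd}$; substituting these into \eqref{eq:gs} and \eqref{eq:rs} shows by direct inspection that $G_{\xd}$ and $R_{\xd}$ are solutions. They have minimal spectral radius because the eigenvalues of any solution of \eqref{eq:gs}, and the reciprocals of the eigenvalues of any solution of \eqref{eq:rs}, are roots of $B_{\xd}(z)$, while the spectra of $G_{\xd}$ and $R_{\xd}$ consist precisely of the $n$ roots of $B_{\xd}(z)$ of smallest modulus, namely $0,\xi_1,\dots,\xi_{n-1}$, and the reciprocals of the $n$ of largest modulus, namely $\xi_{n+2},\dots,\xi_{2n},\infty$, as recorded after \eqref{eq:coeds}. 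The only nonroutine ingredient is the strict separation $|\xi_{n-1}|<1<|\xi_{n+2}|$; the remainder reuses Theorems~\ref{thm:frs} and~\ref{thm:fls} essentially verbatim.
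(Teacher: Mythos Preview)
Your proof is correct and follows essentially the same approach as the paper: the paper's proof is a one-line remark that the result follows as in Theorems~\ref{thm:frs} and~\ref{thm:fls} via the identities $(I-z^{-1}G)(I+\tfrac{\xi_n}{z-\xi_n}Q)=I-z^{-1}(G-\xi_nQ)$ and $(I-\tfrac z{z-\xi_{n+1}}S)(I-zR)=I-z(R-\xi_{n+1}^{-1}S)$, and you have spelled this out in full, including the Brauer-based spectral analysis and the verification of minimality. Your explicit observation that $|\xi_{n-1}|<1<|\xi_{n+2}|$ (hence the factorization is genuinely canonical in all three cases) is a detail the paper leaves implicit.
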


\begin{proof}
The proof can be carried out as the proof of Theorems \ref{thm:frs} and \ref{thm:fls}, since $(I-z^{-1}G)(I+\frac {\xi_n}{z-\xi_n}Q)=I-z^{-1}(G-\xi_n Q)$ and $(I-\frac {z}{z-\xi_{n+1}}S)(I-zR)=I-z(R-\xi_{n+1}^{-1} S)$.
\end{proof}

We show that in the null recurrent case, where $\xi_n=\xi_{n+1}=1$,
the matrix Laurent polynomial $\varphi_{\xd}(z^{-1})$
has also a canonical factorization:

\begin{theorem}\label{thm:lfnr}
Assume that $\xi_n=\xi_{n+1}=1$.  Define  $Q=u_Gv_{\hat G}^T$ and $S=u_{\hat R}v_R^T$, with
 $u_G^Tv_{\hat G}=1$ and $v_R^Tu_{\hat R}=1$.
Normalize the vectors $v_{\wh G}$ and $u_{\wh R}$ so that $v_{\wh G}^T\wh K^{-1}u_{\wh R}=-1$. 
The  function $\varphi_{\xd}(z^{-1})$  defined in \eqref{eq:ds}, has the following canonical factorization
\[
\varphi_{\xd}(z^{-1}) =(I-z\wh R_{\xd})\wh K_{\xd}(I-z^{-1} \wh G_{\xd})
\]
where
\[
\begin{split}
 \wh R_{\xd}&=\wh R+ u_{\wh R}v_{\wh G}^T\wh K^{-1},\\
 \wh G_{\xd}&=\wh G+\wh K^{-1} u_{\wh R}v_{\wh G}^T,\\ 
\wh K&=A_{-1}\wh G+A_0-I,\\
\wh K_{\xd}&=\wh K- u_{\wh R}v_{\wh G}^T.
\end{split}
\] 
Moreover,  the matrices $\wh G_{\xd}$ and $\wh R_{\xd}$ are the solutions with minimal spectral radius of  equations 
\eqref{eq:ghs} and \eqref{eq:rhs},
 respectively.
 \end{theorem}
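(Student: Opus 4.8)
The plan is to combine the two one-sided factorizations of $\varphi_{\xd}(z^{-1})$ already established, exactly as the double shift $\varphi_{\xd}(z)$ was built by composing the right and left shifts. Concretely, in the null recurrent case $\xi_n=\xi_{n+1}=1$, so $G$ and $\wh G$ are stochastic and $u_G=u_{\wh G}=e$; likewise $v_R=v_{\wh R}$. Thus the choices $Q=u_Gv_{\wh G}^T$ and $S=u_{\wh R}v_R^T$ here are precisely the choices made in Theorems \ref{thm:wcfnr} and \ref{thm:wcfnnrl}, respectively, and the normalization $v_{\wh G}^T\wh K^{-1}u_{\wh R}=-1$ is consistent with both (using part 2 of Theorem \ref{prop:4}, $v_{\wh G}^T\wh K^{-1}u_{\wh R}<0$). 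First I would recall from \eqref{eq:coeds} that $\varphi_{\xd}(z)=(I-\frac{z}{z-\xi_{n+1}}S)\varphi(z)(I+\frac{\xi_n}{z-\xi_n}Q)$, whence $\varphi_{\xd}(z^{-1})$ is obtained from $\varphi_{\xl}(z^{-1})$ by a further right multiplication by a rank-one rational factor, or equivalently from $\varphi_{\xr}(z^{-1})$ by a left multiplication; the point is that the two shift operations act on ``disjoint'' spectral data (one removes the root $1$ from $\wh G$, the other from $\wh R$) and therefore do not interfere.

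The key steps, in order, are as follows. (i) Verify by direct substitution that $\wh R_{\xd}=\wh R+u_{\wh R}v_{\wh G}^T\wh K^{-1}$ solves \eqref{eq:rhs} with the coefficients \eqref{eq:coeds}; this computation is essentially the one carried out in the proof of Theorem \ref{thm:wcfnr} for $\wh R_{\xr}$, using the identities $\wh RA_1=A_{-1}\wh G$, $\wh K^{-1}A_1=-\wh G$, $\wh GQ=Q$, $v_{\wh G}^TQ=v_{\wh G}^T$, together with the extra relations coming from the left factor, namely $\wh G u_{\wh R}$-type identities and $v_R^T S=v_R^T$, $S\wh R=\dots$. (ii) Apply Theorem \ref{thm:brauer} twice: $\wh R_{\xd}u_{\wh R}=u_{\wh R}(1+v_{\wh G}^T\wh K^{-1}u_{\wh R})=0$ shows the eigenvalue $1$ of $\wh R$ is moved to $0$ while the others are unchanged, and dually $v_{\wh G}^T\wh G_{\xd}=v_{\wh G}^T\wh G=v_{\wh G}^T$ (note $\xi_{n+1}=1$ so the eigenvalue is $1$) combined with a rank-one perturbation argument gives the spectrum of $\wh G_{\xd}$; hence $\rho(\wh R_{\xd})<1$ and, since the remaining roots of $zB_\xd(z^{-1})$ all lie strictly outside the unit disk, $\rho(\wh G_{\xd})<1$ as well. (iii) Invoke Theorem \ref{thm:w} (part 1, with the roles of $G$ and $R$ interchanged, applied to $\varphi_{\xd}(z^{-1})$ whose associated polynomial now has $|\xi_n^{\xd}|<1<|\xi_{n+1}^{\xd}|$) to conclude that the factorization $\varphi_{\xd}(z^{-1})=(I-z\wh R_{\xd})\wh K_{\xd}(I-z^{-1}\wh G_{\xd})$ exists and is canonical, with $\wh K_{\xd}=A_0^{\xd}-I+A_{-1}^{\xd}\wh G_{\xd}=A_0^{\xd}-I+\wh R_{\xd}A_1^{\xd}$. (iv) Simplify $\wh K_{\xd}$: substitute \eqref{eq:coeds} and the expression for $\wh R_{\xd}$, use $\wh K=A_{-1}\wh G+A_0-I$ and the same batch of identities as in step (i) to collapse the rank-one corrections down to the single term $-u_{\wh R}v_{\wh G}^T$, and then read off $\wh G_{\xd}=-\wh K_{\xd}^{-1}A_1^{\xd}$ via the Sherman–Morrison formula (the scalar $\gamma=1/(1-v_{\wh G}^T\wh K^{-1}u_{\wh R})=1/2$ or $1$ depending on normalization — I would track this carefully) to obtain $\wh G_{\xd}=\wh G+\wh K^{-1}u_{\wh R}v_{\wh G}^T$. (v) The minimal-spectral-radius claim follows as in Theorem \ref{thm:frs}: the eigenvalues of $\wh G_{\xd}$ and $\wh R_{\xd}$ coincide with the $n$ roots of smallest modulus of $B_\xd(z)$ and of $zB_\xd(z^{-1})$, respectively.

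The main obstacle I anticipate is step (i)/(iv): checking that the rank-one corrections to $\wh R$ and $\wh G$ built from \emph{two} shifts simultaneously still produce solutions of \eqref{eq:rhs} and \eqref{eq:ghs} and a nonsingular $\wh K_{\xd}$. Unlike Theorems \ref{thm:wcfnr} and \ref{thm:wcfnnrl}, here both the left coefficient $A_1^{\xd}=(I-S)A_1$ and the right coefficient $A_{-1}^{\xd}=A_{-1}(I-Q)$ are perturbed, and the cross term $SA_{-1}Q$ (equivalently $S A_1 Q$, equal by the identity $A_1G=RA_{-1}$ noted after \eqref{eq:coeds}) appears in $A_0^{\xd}$; one must confirm these cross terms cancel against the cross terms generated by $\wh R_{\xd}^2$ and by $\wh R_{\xd}A_1^{\xd}$. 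The saving grace is the orthogonality-type relations $v_R^T u_{\wh R}=1$, $v_{\wh G}^T u_G=1$ paired with $\wh G u_G = u_G$, $v_R^T \wh R = v_R^T$ (since $\rho_{\wh R}=\xi_n=1$ with left Perron vector $v_R=v_{\wh R}$), and $v_{\wh G}^T\wh K^{-1}u_{\wh R}=-1$, which decouple the two rank-one updates. Once the algebra is organized around these identities, the computation reduces to the two already-completed single-shift computations plus a handful of cross-term cancellations, and the rest is an application of Theorems \ref{thm:w}, \ref{thm:brauer} and \ref{thm:h0} as above. I would also remark, as the paper does after Theorem \ref{thm:crspr}, that the admissibility conditions on $v$ and $w$ are automatically met here because $\xi_n=\xi_{n+1}=1$ forces $v_{\wh G}^T\wh Gu_G=1$ and $v_R^T\wh R u_{\wh R}=1$ but these coincide with the degenerate values only through the normalization, so no genericity hypothesis is needed — the stochasticity of $G,\wh G$ does the work.
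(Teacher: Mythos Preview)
Your overall strategy---verify directly that the candidate $\wh R_{\xd}$ (or $\wh G_{\xd}$) solves the shifted equation, control the spectra via Brauer, then invoke the factorization theorem to produce $\wh K_{\xd}$ and the remaining factor---is exactly the paper's approach. The paper happens to verify $\wh G_{\xd}$ first rather than $\wh R_{\xd}$, a symmetric and equally valid choice.

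There is, however, a concrete slip in your step (ii). You write $v_{\wh G}^T\wh G_{\xd}=v_{\wh G}^T\wh G=v_{\wh G}^T$, but this is false:
\[
v_{\wh G}^T\wh G_{\xd}=v_{\wh G}^T\wh G+(v_{\wh G}^T\wh K^{-1}u_{\wh R})\,v_{\wh G}^T=v_{\wh G}^T-v_{\wh G}^T=0.
\]
Taken literally, your equation would make $1$ an eigenvalue of $\wh G_{\xd}$, contradicting the conclusion $\rho(\wh G_{\xd})<1$ you want. The correct Brauer argument is the transposed one: $v_{\wh G}^T$ is a left eigenvector of $\wh G$ for the eigenvalue $1$, and the rank-one update $\wh K^{-1}u_{\wh R}v_{\wh G}^T$ shifts that eigenvalue by $v_{\wh G}^T\wh K^{-1}u_{\wh R}=-1$, so $1\mapsto 0$. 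The paper records this as $Q\wh G_{\xd}=0$ (and dually $\wh R_{\xd}S=0$) and in fact makes these identities the \emph{first} step of the proof, because they immediately annihilate the factors $(I-Q)$ and $(I-S)$ in $A_{-1}^{\xd}\wh G_{\xd}^2$ and $\wh R_{\xd}^2A_1^{\xd}$ and in the $S$-terms of $A_0^{\xd}$. This is precisely what reduces the double-shift verification to the single-shift computations you cite; it is the ``cross-term cancellation'' you anticipate, and it is worth isolating explicitly rather than leaving inside a batch of identities.

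A second, minor point: your steps (ii) and (iv) are out of order---you use the formula for $\wh G_{\xd}$ in (ii) but only propose to derive it in (iv). The paper avoids this circularity by taking the stated formula for $\wh G_{\xd}$ as given, checking directly that it solves \eqref{eq:ghs}, and then computing $\wh K_{\xd}=A_{-1}^{\xd}\wh G_{\xd}+A_0^{\xd}-I$ by substitution at the end; no Sherman--Morrison inversion (and hence no worry about the value of your $\gamma$) is needed.
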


\begin{proof}
In view of part 2 of Theorem \ref{prop:4}, we have $v_{\wh G}^T\wh K^{-1}u_{\wh R}<0$, therefore we may normalize the vectors so that $v_{\wh G}^T\wh K^{-1}u_{\wh R}=-1$.
Observe that, for the matrix $\wh G_{\xd}$ defined in the theorem, we have 
\[
Q\wh G_{\xd}=u_G v_{\wh G}^T (\wh G+\wh K^{-1} u_{\wh R}v_{\wh G}^T)=
u_G( v_{\wh G}^T+ (v_{\wh G}^T \wh K^{-1} u_{\wh R}) v_{\wh G}^T)=0.
\] 
Similarly, one has  $\wh R_{\xd}S=0$. From Theorem \ref{thm:brauer}, it follows that the eigenvalues of $\wh G_{\xd}$ are those of ${\wh G}$, except for the eigenvalue 1, which is replaced by 0; the same holds for $\wh R_{\xd}$. Now we prove that $\wh G_{\xd}$ solves the equation \eqref{eq:ghs}.
By replacing $X$ with $\wh G_{\xd}$ and the block coefficients with the expressions in \eqref{eq:coeds}, the left hand side of the quadratic equation \eqref{eq:ghs} becomes
\[
A_{-1}(I-Q)\wh G_{\xd}^2+(A_0-I+(I-S)A_1Q+SA_{-1})\wh G_{\xd}+(I-S)A_1.
\]
Since $Q\wh G_{\xd}=0$, the above expression 
simplifies to
\[
A_{-1}\wh G_{\xd}^2+(A_0-I+SA_{-1})\wh G_{\xd}+(I-S)A_1.
\]
Observe that 
$\wh G_{\xd}^2={\wh G}^2+{\wh G}\wh K^{-1} u_{\wh R}v_{\wh G}^T$.
By replacing $\wh G_{\xd}$ and $\wh G_{\xd}^2$ with their expressions in terms of $\wh G$, and by using the property
$ A_{-1}\wh G^2+(A_0-I)\wh G+A_1=0$, we get
\[
\begin{split}
&A_{-1}\wh G_{\xd}^2+(A_0-I+SA_{-1})\wh G_{\xd}+(I-S)A_1=\\
& A_{-1}\wh G \wh K^{-1} u_{\wh R}v_{\wh G}^T + (A_0-I) \wh K^{-1} u_{\wh R}v_{\wh G}^T +SA_{-1}\wh G + S A_{-1} \wh K^{-1} u_{\wh R}v_{\wh G}^T -SA_1=\\
&(A_{-1}\wh G +A_0-I)\wh K^{-1} u_{\wh R}v_{\wh G}^T +SA_{-1}\wh G + S A_{-1} \wh K^{-1} u_{\wh R}v_{\wh G}^T -SA_1+\wh K^{-1} u_{\wh R}v_{\wh G}^T=\\
& u_{\wh R}v_{\wh G}^T +SA_{-1}\wh G + S A_{-1} \wh K^{-1} u_{\wh R}v_{\wh G}^T -SA_1.
\end{split}
\]
This latter equation is zero. Indeed, $-SA_{-1} \wh K^{-1}=S\wh R=S$ and $Su_{\wh R}=u_{\wh R}$, therefore $S A_{-1} \wh K^{-1} u_{\wh R}v_{\wh G}^T= -u_{\wh R}v_{\wh G}^T$; moreover, $SA_{-1}\wh G=S\wh RA_1=SA_1$.
Similarly, we may prove that $\wh R_{\xd}$ solves the 
equation $A^\xd_{-1}+XA^\xd_0-I)+X^2A^\xd_1=0$.
Since $\wh G_{\xd}$ and $\wh R_{\xd}$ are the solutions of minimal spectral radius of  equations
\eqref{eq:ghs} and \eqref{eq:rhs}, we may apply
Theorem 3.20 of \cite{blm:book} and conclude that $\varphi_{\xd}(z^{-1})$ has desired the canonical factorization 
with 
\[\begin{split}
\wh K_{\xd}=&A^\xd_{-1}\wh G_{\xd}+
A^\xd_0-I=
A^\xd_{-1}({\wh G}+\wh K^{-1}u_{\wh R}v_{\wh G}^T)+A^\xd_0-I\\
=&
\wh K+ A^\xd_{-1}\wh K^{-1}u_{\wh R}v_{\wh G}^T=\wh K- u_{\wh R}v_{\wh G}^T,
\end{split}\] since $A^\xd_{-1}\wh K^{-1}=-\wh R$.
\end{proof}

\section*{Appendix}

Here we provide the proof of part 2 of Theorem \ref{prop:4}, i.e., $v_G^T K^{-1} u_R<0$ and $v_{\wh G}^T \wh K^{-1} u_{\wh R}<0$.
 The proof is based on an argument
of accessibility for the states of the doubly infinite  QBD.
Assumptions \ref{a:zero} and \ref{a:one},   together imply that $\s_* = \s$ and so we have
the following property.\smallskip

\noindent 
{\bf A.1}  For any $i,j\in\s$, for any level
 $k$ and $k'$, there is a path from $(k,i)$ to $(k',j)$.

We prove that $v_G^T K^{-1} u_R<0$; the proof that $v_{\wh G}^T \wh K^{-1} u_{\wh R}<0$ is similar and is left to the reader.
For the sake of simplicity we write $u$ in place of $u_R$ and $v$ in place of $v_G$.
The proof consists in analyzing the sign properties of the components
of the vectors $u$ and $-K^{-1}u$, and relies on the irreducibility assumptions. 

\paragraph{The vector $u$}
\label{sec:vector-u}
\  \\
{\it Case 1.}  $R$ is irreducible.  Then $u > 0$, and $K^{-1} u < 0$ also, so that $v^T K^{-1} u  < 0$.

\noindent
{\it Case 2.}  $R$ is reducible.  
\quad
We need to define various subsets of the set of phases $\s = \{1, \ldots, n\}$.  The most
important ones define  a partition of $\s$ into the four subsets
$\s_1$, $\wt\s_1$, $\wt\s_b$ and $\s_a$ that we define later.
As we assume that the matrix $A$ is irreducible, we have after a
suitable permutation of rows and columns
$\left[\begin{smallmatrix}R_1 & R_{12} \\ 0 & R_2\end{smallmatrix}\right]$,
where $R_1$ is irreducible and $R_2$ is strictly upper-triangular.
The proof is in \cite[Theorem 7.2.2, page 154]{lr99}. 
The rows and columns of $R_1$ are indexed by $\s_1$ and those of $R_2$
are indexed by $\s_2$, and so
$
\s = \s_1 \cup \s_2.
$  
By
Assumption \ref{a:one}, $\s_1$ is not empty.

Concerning the eigenvector $u$ of $R$, we have : $u_i > 0$ for any $i$
in $\s_1$ and $u_i=0$ for $i$ in $\s_2$.  The physical meaning of the
partition $\s = \s_1 \cup \s_2$ is given below.  Consider the doubly
infinite QBD process.
\smallskip 

\noindent
{\bf B.1.} For any $i\in\s_1$, for any level $k$, for any displacement
$h \geq 1$, there exists $i'\in\s_1$ such that there is a path from
$(k,i)$ to $(k+h, i')$ avoiding level $k$ and the levels below.
\smallskip

\noindent
{\bf B.2.} For any $i\in\s_2$, for any level $k$, for any displacement
$h \geq 1$, for any $i'\in\s_1$, there is no path from $(k,i)$ to
$(k+h,i')$ avoiding level $k$;  i.e.,
any path from $(k,i)$ to $(k+h,i')$ has to go through level $k$ or $k-1$.

\paragraph{The vector $K^{-1}u$}
\label{sec:vector--k}
\ 

\noindent
The matrix $-K^{-1}$ is about transitions within a level $k$ without
visiting level $k-1$: $(-K^{-1})_{ij}$ is the expected number of
visits to $(k,j)$, starting from $(k,i)$ before any visit to level
$k-1$.  Clearly, 
\smallskip

\noindent
{\bf C.1.} $(-K^{-1})_{ij}>0$ if and only if there exists a path from
$(k,i)$ to $(k,j)$ that avoids level $k-1$, possibly after visiting some
states in level $k+1$ or above.
\smallskip

Define $w = -K^{-1} u$.  We have $w \geq u$, so that $w_i >0$ for all
$i$ in $\s_1$.  Furthermore, there may be some phases $i$ in $\s_2$
such that $w_i>0$; define $\wt\s_1$ as the subset of phases $i$ such
that $u_i=0$, $w_i>0$, and define $\wt\s_2 = \s_2 \setminus \wt\s_1$,
thus
$
\s_2 = \wt\s_1 \cup \wt\s_2
$. \smallskip

\noindent
{\em Case 1.} $w_i > 0$ for all $i$, i.e., $\wt\s_2$ is
empty.  Then, $v^T w >0$ and we are done.\smallskip

\noindent
{\em Case 2.} The set $\wt\s_2$ is not empty, there are phases
$i$ such that $w_i =0$.  From B.1, B.2 and C.1, we find that the
physical meaning is as follows.\smallskip

\noindent
{\bf D.1.} 
For any $i\in\wt\s_1$, for any level $k$, for any displacement $h \geq 1$, there exists $i'\in\s_1$ such that there is a path from $(k,i)$ to $(k+h, i')$ avoiding level $k-1$ and any level below.
\smallskip

\noindent
{\bf D.2.}
For any phase $i\in\wt\s_2$, for any level $k$, for any
displacement $h \geq 0$, for any $i'\in\s_1$, there is no path from
$(k,i)$ to $(k+h,i')$ avoiding level $k-1$.

\paragraph{The product $v^T K^{-1} u$}
\label{sec:product--vt}
\ 

\noindent
By \cite[Theorem 7.2.1, page 152]{lr99}, we have after a suitable
permutation of rows and columns
$
G =\left[\begin{smallmatrix}G_a & 0 \\  G_{ba} & G_b\end{smallmatrix}\right]
$
where $G_a$ is irreducible and $G_b$ is strictly lower triangular.
The rows and columns of $G_a$ are indexed by $\s_a$ and those of $G_b$
are indexed by $\s_b$; define $n_b=|\s_b|$,
thus,
$
\s = \s_a \cup \s_b.
$  
The left eigenvector $v$ of $G$ is such that $v_i > 0$ for $i$ in
$\s_a$ and $v_i=0$ for $i$ in~$\s_b$

By Assumption
\ref{a:one}, $\s_a$ is not empty.  The physical
interpretation is as follows.\smallskip

\noindent
{\bf E.1.}
For all $i\in\s_a$, there is $i'\in\s_a$ such that there is a path from $(k,i)$ to
  $(k-1,i')$, avoiding level $k-1$ at the intermediary steps,
  independently of $k$.
\smallskip

\noindent
{\bf E.2.}
For all $i\in\s_a$, for all $i'\in\s_b$, there is no path from $(k,i)$ to
  $(k-1,i')$, avoiding level $k-1$ at the intermediary steps,
  independently of $k$.

Now, let us assume that $v^T K^{-1} u =0$.   This implies that 
if $v_i >0$, then $w_i =0$, so that $\s_a \subseteq \wt\s_2$, with the
possibility that $\wt\s_b$, defined as 
$\wt\s_b = \s_b \setminus(\s_1
\cup \wt\s_1) = \wt\s_2 \setminus \s_a,$
 may be empty or not empty.   In summary, we have the table

\begin{center}
  \begin{tabular}{c|cccc}
 & $\s_1$ & $\wt\s_1$ & $\wt\s_b$ & $\s_a$  \\
\hline
$u$ & $u_i >0$ & $u_i =0$ & $u_i=0$ & $u_i =0$ \\
$w$ & $w_i >0$ & $w_i >0$ & $w_i=0$ & $w_i =0$  \\
$v$   & $v_i =0$ & $v_i =0$ & $v_i=0$ & $v_i >0$
  \end{tabular}
\end{center}
where $\s_1$ and $\s_a$ are not empty, and we have
$
\s_2 = \wt\s_1 \cup \wt\s_b \cup \s_a,   ~ 
\wt\s_2 = \wt\s_b \cup \s_a,   ~
\s_b = \s_1 \cup \wt\s_1 \cup \wt\s_b
$.

Now, let us fix some arbitrary initial level $k_0$ and take any phase
$i$ in $\s_a$.
Since $\s_a \subset \wt\s_2$, we know by D.2 that any path
from $(k,i)$ to any state $(k+h,j)$ with $h \geq 0$ and $j$ in $\s_1$ must pass through level $k-1$.
By E.1 and E.2, from the state $(k,i)$, the first state $(k-1,i')$ on
any path through level $k-1$ is such that $i'$ is in $\s_a$.
Therefore, we know that any path from $(k,i)$ to any state $(k-1+h,
j)$ with $h \geq 0$ and $j$ in $\s_1$ must pass through level $k-2$.
We repeat the argument, and find that there is no path from $\ZZ
\times \s_a$ to
any state in $\ZZ \times \s_1$.   
This contradicts the property {A.1}.

\end{document}